\documentclass[12pt]{amsart}

\usepackage[hmargin=2.5cm,bmargin=2.5cm,tmargin=3cm]{geometry}

\usepackage{amsmath,amssymb,amsthm,amsfonts,enumerate,url}
\usepackage{mathrsfs}
\usepackage{graphicx}

\usepackage[colorlinks, linkcolor=blue, filecolor=blue,
     citecolor = olive, urlcolor=purple]{hyperref}
\usepackage{tikz}
\usepackage{orcidlink}

\usetikzlibrary{automata,positioning,arrows.meta}
%

\usepackage[utf8]{inputenc}



\theoremstyle{plain}

\numberwithin{equation}{section}
\numberwithin{figure}{section}

\newcommand\Graph{\mathcal{G}}

\newcommand{\C}{\mathbb{C}}
\newcommand{\N}{\mathbb{N}}


%

\def\dashint{\,\ThisStyle{\ensurestackMath{%
            \stackinset{c}{.2\LMpt}{c}{.5\LMpt}{\SavedStyle-}{\SavedStyle\phantom{\int}}}%
        \setbox0=\hbox{$\SavedStyle\int\,$}\kern-\wd0}\int}

 \def\mGraph{\mathcal{G}} 
 \def\mG{\mathsf{G}} 
 
 \def\mV{\mathsf{V}}
 
 \def\mE{\mathsf{E}}

 \def\me{\mathsf{e}}

\def\Deltadg{\Delta^{\mathrm{F}}_{\mGraph}}\def\Deltang{\Delta^{\mathrm{N}}_{\mGraph}}
\def\Deltanga{\Delta^{\mathrm{N}}_{\mGraph_\alpha}}

\DeclareMathOperator{\diam}{diam}

\DeclareMathOperator{\dist}{dist}

\usepackage{aliascnt}

\theoremstyle{plain}
\newtheorem{theo}{Theorem}[section]

\newaliascnt{cor}{theo}
\newaliascnt{prop}{theo}
\newaliascnt{lemma}{theo}

\newtheorem{lemma}[lemma]{Lemma}
\newtheorem{prop}[prop]{Proposition}

\aliascntresetthe{cor}
\aliascntresetthe{prop}
\aliascntresetthe{lemma}

\theoremstyle{definition} 

\newaliascnt{defi}{theo}
\newaliascnt{assum}{theo}
\newaliascnt{assums}{theo}
\newaliascnt{prob}{theo}
\newaliascnt{conv}{theo}

\newtheorem{defi}[defi]{Definition}

\aliascntresetthe{defi}
\aliascntresetthe{assum}
\aliascntresetthe{assums}
\aliascntresetthe{prob}
\aliascntresetthe{conv}

\theoremstyle{remark}

\newaliascnt{rems}{theo}
\newaliascnt{rem}{theo}
\newaliascnt{exa}{theo}
\newaliascnt{exs}{theo}

\newtheorem{rem}[rem]{Remark}

\aliascntresetthe{rems}
\aliascntresetthe{rem}
\aliascntresetthe{exa}
\aliascntresetthe{exs}

%

\usepackage[makeroom]{cancel}
\usepackage[normalem]{ulem}

\usepackage{verbatim}
\usepackage{array}

\title[Eigenvalue asymptotics on diagonal combs]{Towards a theory of eigenvalue asymptotics on infinite metric graphs: the case of diagonal combs} 

\author[J.B.~Kennedy]{James B.~Kennedy\orcidlink{0000-0001-5634-0301}}
\author[D.~Mugnolo]{Delio Mugnolo\orcidlink{0000-0001-9405-0874}}
\author[M.~Täufer]{Matthias Täufer\orcidlink{0000-0001-8473-2310}}

\address{James B.~Kennedy, Grupo de F\'isica-Matem\'atica \emph{and} Departamento de Matem\'atica, Faculdade de Ci\^encias da Universidade de Lisboa, Campo Grande, P-1749-016 Lisboa, Portugal}
\email{jbkennedy@fc.ul.pt}

\address{Delio Mugnolo, Lehrgebiet Analysis, Fakultät Mathematik und Informatik, Fern\-Universität in Hagen, D-58084 Hagen, Germany}
\email{delio.mugnolo@fernuni-hagen.de}

\address{Matthias Täufer, Lehrgebiet Analysis, Fakultät Mathematik und Informatik, Fern\-Universität in Hagen, D-58084 Hagen, Germany}
\email{matthias.taeufer@fernuni-hagen.de}

\subjclass[2010]{34B45, 35P15, 81Q35}

\keywords{Spectral geometry of infinite metric graphs; Eigenvalue asymptotics; Weyl law}

\thanks{The work of J.B.K. was supported by the Funda\c{c}\~ao para a Ci\^encia e a Tecnologia, Portugal, via project PTDC/MAT-PUR/1788/2020 and grant UIDB/00208/2020. The work of D.M.\ was partially supported by the Deutsche Forschungsgemeinschaft (Grant 397230547). This article is based upon work from COST Action 18232 MAT-DYN-NET, supported by COST (European Cooperation in Science and Technology), \url{www.cost.eu}.}

\begin{document}

\begin{abstract}	
We examine diagonal combs, a recently identified class of infinite metric graphs whose properties depend on one parameter. These graphs exhibit a fascinating regime where they possess infinite volume while maintaining purely discrete spectrum for the Neumann Laplacian. In this regime, we establish polynomial upper and lower bounds on the $k$-th eigenvalue, revealing that the eigenvalues grow at a rate strictly slower than quadratic. However, once the diagonal combs transition to finite volume, their growth accelerates to a quadratic rate. Our methodology involves employing spectral geometric principles tailored for metric graphs, complemented by deriving estimates for the $k$-th eigenvalue on compact metric graphs.
\end{abstract}

\maketitle

\section{Introduction}

Metric graphs are metric measure spaces that locally look 1-dimensional, apart from a discrete set $\mV$ of singularities (the \textit{vertices}). 
They arise by identifying each edge $\me\in \mE$ of a classical (combinatorial) graph $\mG=(\mV,\mE)$ with an interval of length $\ell_\me\in (0,\infty)$. They were introduced in the mathematical literature by Lumer in~\cite{Lum80a}. In the 1980s, it was mostly the spectral property of Laplacians that drew the attention of those developing the theory of
metric graphs. 
For this reason, it was natural to focus on metric graphs consisting of finitely many edges \cite{Bel85,Nic85,Ali86}, since in this case the Laplacian is easily seen to have compact resolvent and, hence, purely discrete spectrum. In particular, it has been observed in~\cite{Nic87} that a Weyl-type eigenvalue asymptotics holds in this case, with the leading order term encoding the metric graph's volume: this was first obtained for equilateral graphs, but the latter condition can be removed, see~\cite[Theorem~4.6]{Kur23}.

Infinite metric graphs already appeared in~\cite{Lumer1979-1980}. 
To the best of our knowledge, their spectral properties started being studied in~\cite{AliNic93}. Along with standard vertex conditions, additional ``boundary conditions at infinity'' (more precisely, at the graph ends) may have to be imposed to enforce self-adjointness of the operator \cite{KosMugNic22}. 
The most natural such realizations are the Friedrichs and the Neumann realizations $\Deltadg$ and $\Deltang$, respectively, where Dirichlet- and Neumann-type conditions are imposed at all ends. Their spectral properties in the case of purely discrete spectrum were thoroughly studied in~\cite{DufKenMug22}; here we will focus on the latter.

It was observed in~\cite{Car00} that $\Deltang$ has purely discrete spectrum on all infinite metric graphs of finite volume. 
Notably, the classical proof of Weyl's asymptotics from~\cite{Nic87} breaks down and it seems to be currently unknown whether $\Deltang$ will always have a classical Weyl-type eigenvalue asymptotics on general infinite metric graphs of finite volume.

 Slightly reminiscent of Strauss' embedding theorem for radially symmetric functions, the first nontrivial examples of infinite metric graphs with infinite volume (radially symmetric metric trees with strong volume growth) such that $\Deltadg$ has purely discrete spectrum
have been presented in~\cite{Sol04} and, under less restrictive geometric conditions, in~\cite{KosNic19}. 
A full characterization of infinite metric graphs such that $\Deltadg$ or even $\Deltang$ have purely discrete spectrum was obtained in \cite[Section~3]{DufKenMug22}.

In this note we will delve into a finer spectral analysis of $\Deltang$ and initiate a study of eigenvalue asymptotics on a peculiar class of metric graphs introduced in~\cite{DufKenMug22}: a one-parameter family $(\Graph_\alpha)_{\alpha\in (0,\infty)}$ of infinite, locally finite metric graphs with constant diameter, which we refer to as \textit{diagonal combs}, on which we consider the Neumann Laplacian $\Deltanga$.
It was shown in~\cite{DufKenMug22} that $\Deltanga$ exhibits a phase transition for its spectrum: it is purely discrete for all $\alpha>\frac{1}{2}$ but there is nonempty essential spectrum for all $\alpha\in (0,\frac{1}{2}]$; however, the total volume of $\Graph_\alpha$ is finite if and only if $\alpha > 1$.

As an immediate consequence, clearly, the spectrum cannot have the usual Weyl asymptotics in the regime $\alpha \in (\frac{1}{2},1]$ any more; but it is natural to ask what asymptotics the eigenvalues will satisfy and, in particular, whether we still have at most polynomial growth. The answer is yes: we prove polynomial upper and lower bounds on the $k$-th eigenvalue (with different growths for the upper and the lower bound). More precisely, the main result of this note is that another phase transition exists concerning the eigenvalue asymptotics, corresponding exactly to the critical volume threshold: $\Deltanga$ admits a Weyl asymptotics for all $\alpha>1$ and a non-Weyl asymptotics for $\alpha<1$, while an unusual logarithmic term pops up at the critical parameter $\alpha=1$, see \autoref{theo:1}, \autoref{theo:2} and \autoref{theo:3}, respectively.

We are not aware of results about phase transition for the Weyl asymptotics for any family of Euclidean domains with cusps in higher dimension, let alone for \textit{convex} domains -- a class of domains that is, by~\cite[Remark~4.21]{DufKenMug22}, arguably close to diagonal combs. While (families of) fractals may look like natural candidates for non-Weyl behavior, this topic appears to have been seldom studied in the literature; we are aware of~\cite{AloFre17}, where so-called \textit{Hanoi-attractors} $(K_\alpha)_{\alpha\in (0,\frac{1}{3})}$ are shown \textit{not} to exhibit parameter dependence in the Weyl asymptotics, and in fact the eigenvalues grow polynomially with exponent $\ln 9/\ln 5$ for all $\alpha$.

\section{Definitions and main result}

We will only need to define diagonal comb metric graphs and therefore only provide a lightweight definition of the central objects. 
For more comprehensive definitions of infinite metric graphs, Sobolev spaces and Laplacians on them, we refer the reader to~\cite{Mug19,DufKenMug22} and the references therein.

A metric graph is a (countable) collection of (finite) intervals $(\me)_{\me \in \mE}$, each of length $\ell_\me\in (0,\ell_\me)$, topologically glued at their endpoints to form a graph-like space which carries both a metric structure (the shortest-path metric) as well as a a measure structure (inherited from the Lebesgue measure on the intervals).

All metric graphs that will appear in this note are connected and locally finite, i.e., each vertex is only adjacent to a finite number (indeed, 1, 2, or 3) of further vertices.

\begin{defi}
The \emph{volume} $|\Graph|$ of a metric graph $\mGraph$ is the sum over its edge lengths, $|\mG|:=\sum_{\me\in\mE}\ell_\me$.
\end{defi}

\begin{defi}
The \emph{Sobolev space} $H^1(\mGraph)$ is the space of functions $\phi \colon \mGraph \to \C$ such that
 \begin{itemize}
	\item
	their restriction to each edge $\me$ is in $H^1(\me)$, the Sobolev space of once weakly differentiable $L^2(\me)$ functions with weak derivatives in $L^2(\me)$,	
	\item
	the sum over all $H^1(\me)$-norms of restrictions of $\phi$ to edges is finite, and
	\item
	$f$ is continuous on $\mGraph$.
\end{itemize}
\end{defi}

\begin{defi}
	For a metric graph $\mGraph$, the \emph{Neumann Laplacian} $\Deltang$ is the unique self-adjoint operator associated with the quadratic form
	\begin{equation}
	\label{eq:form}
\phi 
	\mapsto 
	\sum_{\me \in \mE}
	\int_{\me} \lvert \phi'(x) \rvert^2 \mathrm{d} x,\qquad \phi\in H^1(\Graph).
	\end{equation}
	\end{defi}
	This is a nonnegative self-adjoint operator in $L^2(\mGraph)$. If $\Delta_\mGraph$ has purely discrete spectrum, we denote the sequence of its eigenvalues, enumerated increasingly and counting multiplicities, by $\{ \lambda_k(\mGraph)\}_{k \geq 1}$. 

Note that~\eqref{eq:form} is independent of the chosen orientation of an edge due to the square.
Furthermore, is possible to make this definition more general by imposing Dirichlet boundary conditions at vertices and at so-called graph ends of finite volume.
We refer the reader to~\cite{DufKenMug22} for details on this construction.

In this note, we will focus on one class of examples, namely the \emph{diagonal combs}.

\begin{defi}
	For $\alpha > 0$ define the \emph{diagonal comb metric graph} $\mGraph_\alpha$ by taking the interval $(0,1]$ as a ``backbone'' and attaching, for every $n = 1,2,\dots$, an edge of length $\frac{1}{n^\alpha}$ to the point identified with $\frac{1}{n^\alpha} \in (0,1]$. 
	
	More precisely, this is a metric graph the edges of which are the ``teeth'' -- edges of length $\frac{1}{n^{\alpha}}$ -- and edges connecting the bases of the teeth such that the foot point corresponding to the $n$-th tooth is 
	\begin{itemize}
	\item
	connected with the $(n+1)$-st foot point by an edge of length $\frac{1}{n^\alpha} - \frac{1}{(n+1)^\alpha}$,
	\item
	connected with the $(n-1)$-st foot point by an edge of length $\frac{1}{(n-1)^\alpha} - \frac{1}{n^\alpha}$.
	\end{itemize}	
	We refer to Figure~\ref{fig:diagonal_comb} for an illustration.
\end{defi}

\begin{figure}
	\begin{tikzpicture}[scale = 4]
		
		\begin{scope}[xshift = 0cm]
		\draw[thick] (0,0) -- (1,0);
		
		\foreach \x in {1,...,50}
		{
		\draw[thick] ({1/sqrt(\x)},0) -- ({1/sqrt(\x)},{1/sqrt(\x)});
		}
		\draw[fill] (0,0) -- (.15,0) -- (.15,.15) -- (0,0);

		\draw (0.725,-.3) node {\large $n^{-\frac{1}{2}}$};
		\draw (0,-.1) node {$0$};
		\draw (1,-.1) node {$1$};		
		\end{scope}	
	\end{tikzpicture}

\caption{The diagonal comb $\mGraph_\alpha$.}
\label{fig:diagonal_comb}
\end{figure}

Note that the point which would correspond to $0$ on the ``backbone'' is a so-called \textit{graph end}, see~\cite{DufKenMug22, KosNic19} for a definition.
The following is known.

\begin{prop}[{See~\cite{DufKenMug22}, Theorem 3.4 (1)--(2)}]
	The volume $|\mGraph_\alpha|$ is finite if and only if $\alpha \in (1, \infty)$.
	
	The spectrum of the Kirchhoff Laplacian $\Deltanga$ on $\mGraph_\alpha$ is purely discrete if and only if $\alpha \in (\frac{1}{2}, \infty)$, whereas for $\alpha \in (0,\frac{1}{2}]$, there is non-empty essential spectrum.
\end{prop}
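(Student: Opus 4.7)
The volume claim is an immediate computation: the backbone edges, of lengths $n^{-\alpha}-(n+1)^{-\alpha}$, telescope to a total length of $1$, while the teeth contribute $\sum_{n\geq 1}n^{-\alpha}$, which is finite if and only if $\alpha>1$ by the $p$-series test.

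For the discreteness claim in the range $\alpha>\tfrac{1}{2}$, the plan is to prove compactness of the embedding $H^1(\mGraph_\alpha)\hookrightarrow L^2(\mGraph_\alpha)$, which is equivalent to pure discreteness of $\sigma(\Deltanga)$. I would split $\mGraph_\alpha=K_N\cup T_N$, where $K_N$ is the compact core consisting of the first $N-1$ teeth plus the backbone segment from the $N$-th foot-point $\mv_N$ back to $\mv_1$, and $T_N$ is the remaining tail. Since compactness of the embedding is classical on $K_N$, a standard diagonal argument reduces matters to an estimate
\[
\|\phi\|_{L^2(T_N)}^2\;\leq\; c(N)\,\|\phi\|_{H^1(\mGraph_\alpha)}^2,\qquad c(N)\xrightarrow{N\to\infty}0.
\]
Decomposing $\phi=\phi(\mv_n)+\tilde{\phi}$ with $\tilde{\phi}(\mv_n)=0$ on each tooth of length $\ell_n=n^{-\alpha}$ and invoking a Poincaré-type estimate yields $\|\phi\|_{L^2(\text{tooth}_n)}^2\leq C_1\ell_n|\phi(\mv_n)|^2+C_2\ell_n^2\|\phi'\|_{L^2(\text{tooth}_n)}^2$; summed over $n\geq N$, the second term contributes at most $C_2 N^{-2\alpha}\|\phi\|_{H^1}^2$ and vanishes. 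Everything thus comes down to the foot-point tail $\sum_{n\geq N}|\phi(\mv_n)|^2/n^\alpha$, together with an analogous (easier) bound on the backbone piece beyond $\mv_N$.

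The key observation is that, when $\alpha\in(\tfrac{1}{2},1]$, every $\phi\in H^1(\mGraph_\alpha)\subset L^2(\mGraph_\alpha)$ must vanish at the graph end. Indeed, the backbone has finite total length, so $\phi|_{\mathrm{bb}}$ extends continuously to $[0,1]$ with a value $\phi(0)$ at the end; a matching lower Poincaré bound shows that each tooth contributes at least $\tfrac12\ell_n|\phi(\mv_n)|^2$ minus a controlled error to $\|\phi\|_{L^2}^2$, so if $\phi(0)\neq 0$ the sum would diverge like $\phi(0)^2\sum n^{-\alpha}=\infty$, contradicting $\phi\in L^2$. With $\phi(0)=0$ in hand, Cauchy--Schwarz on $[0,1/n^\alpha]$ gives $|\phi(\mv_n)|^2\leq n^{-\alpha}\|\phi'\|_{L^2(\mathrm{bb})}^2$, whence
\[
\sum_{n\geq N}\frac{|\phi(\mv_n)|^2}{n^\alpha}\;\leq\;\|\phi\|^2_{H^1}\sum_{n\geq N}\frac{1}{n^{2\alpha}},
\]
with tail vanishing precisely when $\alpha>\tfrac{1}{2}$. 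For $\alpha>1$ the argument is simpler: a direct Sobolev bound $|\phi(\mv_n)|\leq C\|\phi\|_{H^1}$ combined with convergence of $\sum n^{-\alpha}$ already suffices.

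In the complementary regime $\alpha\in(0,\tfrac{1}{2}]$ I would destroy compactness by an explicit Weyl-type sequence. Let $\phi_N$ be the continuous function equal to the constant value $a_n^{(N)}:=(n-N)/N$ on tooth $n$ for $N\leq n\leq 2N$, linear on the intervening backbone segments, and zero outside this index range. Using the asymptotic backbone segment length $\asymp n^{-\alpha-1}$, each active segment contributes Dirichlet energy $|a_{n+1}^{(N)}-a_n^{(N)}|^2\cdot n^{\alpha+1}\asymp N^{\alpha-1}$, giving $\|\phi_N'\|^2_{L^2}\asymp N^\alpha$, while the teeth yield $\|\phi_N\|^2_{L^2}\asymp N^{1-\alpha}$. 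The Rayleigh quotient $\asymp N^{2\alpha-1}$ stays bounded precisely when $\alpha\leq\tfrac{1}{2}$. Choosing indices $N_k=4^k$ makes the supports pairwise disjoint, so the $L^2$-normalized family is orthonormal and bounded in the form norm, precluding compactness and therefore yielding nonempty essential spectrum. The main technical subtlety I foresee is the enforcement of $\phi(0)=0$ for $\alpha\in(\tfrac{1}{2},1]$, which relies on the reverse Poincaré lower bound on each tooth; all other steps are essentially bookkeeping once the tooth/backbone split is in place.
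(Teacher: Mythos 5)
The paper does not actually prove this proposition --- it is imported verbatim from \cite[Theorem 3.4]{DufKenMug22} --- so there is no in-paper argument to compare against; your proposal is a genuine self-contained attempt. The volume computation is correct, and your compactness argument for $\alpha>\tfrac12$ is sound: the tooth-wise Poincar\'e estimate, the reduction to the weighted tail $\sum_{n\ge N} n^{-\alpha}|\phi(\mv_n)|^2$, and in particular the forced vanishing of $\phi$ at the graph end for $\alpha\in(\tfrac12,1]$ (via the reverse Poincar\'e bound on the teeth and divergence of $\sum n^{-\alpha}$) are all correct; the last point is exactly the automatic Dirichlet condition recorded in \autoref{rem:automatic-dirichlet}. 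The diagonal extraction then gives compactness of $H^1(\mGraph_\alpha)\hookrightarrow L^2(\mGraph_\alpha)$, hence discreteness.

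The one genuine gap is in the singular sequence for $\alpha\in(0,\tfrac12]$. As written, $\phi_N$ takes the value $a_{2N}^{(N)}=1$ on tooth $2N$ (and hence at its foot point) while being ``zero outside this index range'', so it is discontinuous at that foot point and does not lie in $H^1(\mGraph_\alpha)$. If instead you interpolate from $1$ to $0$ across the single backbone segment adjacent to tooth $2N$, that segment has length $\asymp N^{-\alpha-1}$ and alone contributes Dirichlet energy $\asymp N^{\alpha+1}$, which dominates your total $\asymp N^{\alpha}$ and destroys the bound on the Rayleigh quotient (it becomes $\asymp N^{2\alpha}$, unbounded). The fix is standard: use a tent profile, e.g.\ $a_n^{(N)}=(n-N)/N$ for $N\le n\le 2N$ and $a_n^{(N)}=(3N-n)/N$ for $2N\le n\le 3N$, zero outside $[N,3N]$, so that all increments remain of size $1/N$. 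The energy and norm computations then go through exactly as you wrote them (up to constants), the supports for $N_k=4^k$ remain disjoint, and the conclusion that the embedding is non-compact, hence that $\Deltanga$ has nonempty essential spectrum for $\alpha\le\tfrac12$, follows. With that repair the proposal is a complete and correct proof.
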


So, somewhat counterintuitively, the transition from purely discrete to nonempty essential spectrum does not happen at the finite-to-infinite volume transition, but among graphs of infinite volume.

\begin{rem}\label{rem:automatic-dirichlet}
We observe that $\lambda_1(\Graph_\alpha)>0$ whenever $\alpha \in (\frac{1}{2}, 1]$, since otherwise the associated eigenfunction would be constant and, then, identically 0. Furthermore, in the regime $\alpha \in (\frac{1}{2}, 1]$, the (unique) graph end $\gamma_0$ at the point on the backbone, identified with $0$, is an \emph{end of infinite volume} which implies in particular that $\Deltanga$ will impose a Dirichlet condition at it, see the proof of~\cite[Theorem~3.10]{KosMugNic22}: more precisely, $\lim\limits_{x\to \gamma_0}f(x)=0$, for all $f\in H^1(\Graph_\alpha)$. 
\end{rem}

Our main result sheds light on the asymptotics of the eigenvalues in the regime $\alpha \in (\frac{1}{2}, 1]$.

\begin{theo}
	\label{theo:1}
	For the diagonal comb $\mGraph_\alpha$ with parameter $\alpha \in (\frac{1}{2}, 1)$, there are $c,C > 0$, depending on $\alpha$, such that for sufficiently large $k$
	\[
	c k^{4 \alpha - 2}
	\leq
	\lambda_k(\mGraph_\alpha)
	\leq
	C k^{2 \alpha}.
	\]
\end{theo}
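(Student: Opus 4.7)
The upper bound follows from a direct application of the min-max principle with test functions localized on individual teeth. For each $n = 1, \ldots, k$, define $\phi_n \in H^1(\mathcal{G}_\alpha)$ to be supported on the $n$-th tooth, setting, in a parametrization with $y = 0$ at the base and $y = n^{-\alpha}$ at the tip,
\[
\phi_n(y) = \sin\Bigl(\frac{\pi y}{2\, n^{-\alpha}}\Bigr),
\]
and extending by zero to the rest of $\mathcal{G}_\alpha$; since $\phi_n(0) = 0$ and $\phi_n'(n^{-\alpha}) = 0$, the extension lies in $H^1(\mathcal{G}_\alpha)$. The $\phi_n$ are pairwise orthogonal in $L^2$ (disjoint supports) and jointly diagonalize the Dirichlet form; a direct computation yields $\|\phi_n\|_{L^2}^2 = n^{-\alpha}/2$ and $\|\phi_n'\|_{L^2}^2 = \pi^2 n^\alpha/8$, so that $\mathcal{R}(\phi_n) = \pi^2 n^{2\alpha}/4$. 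For $u = \sum_{n \leq k} a_n \phi_n$ in $V_k := \mathrm{span}(\phi_1, \ldots, \phi_k)$, the Rayleigh quotient is maximized at $\phi_k$, yielding $\max_{V_k} \mathcal{R} = \pi^2 k^{2\alpha}/4$ and, by min-max, $\lambda_k(\mathcal{G}_\alpha) \leq \pi^2 k^{2\alpha}/4$.

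For the lower bound I use the Courant--Fischer dual form $\lambda_k(\mathcal{G}_\alpha) = \max_W \inf_{u \perp W} \mathcal{R}(u)$, where $W$ ranges over $(k-1)$-dimensional subspaces of $L^2(\mathcal{G}_\alpha)$. Given an energy $M > 0$, the plan is to construct $W$ with $\dim W \lesssim M^{1/(4\alpha-2)}$ such that every $u \in H^1(\mathcal{G}_\alpha)$ with $u \perp W$ satisfies $\|u\|^2 \leq (C/M)\|u'\|^2$; then $\lambda_{\dim W + 1} \geq M/C$, and inverting $k \leftrightarrow M$ yields $\lambda_k \geq c k^{4\alpha - 2}$. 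I take $W$ to be spanned by characteristic functions $\chi_C$ of the cells $C$ in a suitable partition of $\mathcal{G}_\alpha$, so that the orthogonality condition becomes zero mean on each cell and a cell-by-cell Poincar\'e inequality suffices.

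Set $N_* := \lceil M^{1/(2\alpha - 1)} \rceil$ and $N_{**} := \lceil M^{1/(2\alpha)} \rceil$ (note $N_{**} \leq N_*$ for $\alpha < 1$). The partition of $\mathcal{G}_\alpha$ has three parts: (i) a \emph{tail} $T_0$ consisting of the backbone segment $[0, N_*^{-\alpha}]$ together with all teeth of index $n > N_*$, which is \emph{excluded} from $W$ since $\chi_{T_0} \notin L^2$, though we exploit the automatic Dirichlet condition $u(\gamma_0) = 0$ there (cf.~\autoref{rem:automatic-dirichlet}); (ii) \emph{short-front chunks}, covering teeth with $N_{**} < n \leq N_*$ and the backbone in between, where consecutive teeth are grouped so that the $j$-th chunk has $\sim m_j^{\alpha}/\sqrt{M}$ teeth (for $m_j$ near the start of that chunk) and total length $\lesssim 1/\sqrt{M}$; (iii) \emph{long-front intervals} obtained by subdividing each edge in the remaining region (teeth $n \leq N_{**}$ and the backbone $[N_{**}^{-\alpha}, 1]$) into pieces of length $\leq 1/\sqrt{M}$. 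Counting via integral comparison, the chunks in (ii) number $\sim \sqrt{M}\int_{N_{**}}^{N_*} n^{-\alpha}\,dn \sim M^{1/(4\alpha-2)}$, while (iii) contributes $O(M^{1/(2\alpha)}) + O(\sqrt{M})$ cells; both are subordinate to $M^{1/(4\alpha-2)}$ for $\alpha \in (\tfrac{1}{2},1)$, so $\dim W \leq C M^{1/(4\alpha-2)}$.

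On each cell of type (iii), a standard Poincar\'e--Wirtinger inequality gives $\int u^2 \leq C/M \int |u'|^2$ for $u$ of zero mean; on each chunk in (ii), a metric tree of total length $\lesssim 1/\sqrt{M}$, the Friedlander-type bound $\lambda_2 \geq \pi^2/L^2$ for metric trees supplies the same estimate. On the tail, using $u(\gamma_0) = 0$, the pointwise estimate $|u(n^{-\alpha})|^2 \leq n^{-\alpha}\|u'\|_{L^2([0, n^{-\alpha}])}^2$ and the per-tooth bound $\int_{\text{tooth }n} u^2 \leq 2 n^{-\alpha} |u(n^{-\alpha})|^2 + n^{-2\alpha}\|u'\|_{L^2(\text{tooth})}^2$, summed over $n > N_*$ via $\sum_{n > N_*} n^{-2\alpha} \sim N_*^{1-2\alpha}/(2\alpha - 1)$, yield $\int_{T_0} u^2 \lesssim N_*^{1-2\alpha}\|u'\|^2 = M^{-1}\|u'\|^2$ by the choice of $N_*$. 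The main obstacle is the design of the variable-size chunks in (ii): the tail cutoff $N_* \sim M^{1/(2\alpha-1)}$ is \emph{forced} by the Hardy/Friedrichs estimate on $T_0$, but the region $(N_{**}, N_*]$ then contains $\sim M^{2/(4\alpha-2)}$ teeth, so placing one cell per tooth would exceed the target count $M^{1/(4\alpha-2)}$ by a factor of $M^{1/(4\alpha-2)}$. The resolution is that the constraint $L_{\text{cell}} \leq 1/\sqrt{M}$ still permits grouping consecutive teeth near index $m$ into chunks of $\sim m^\alpha/\sqrt{M}$ teeth, and summing the reciprocal chunk size over $[N_{**}, N_*]$ gives back the target exponent $1/(4\alpha - 2)$.
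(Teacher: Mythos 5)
Your argument is correct, and both halves take a genuinely different route from the paper's. For the upper bound the paper truncates $\Graph_\alpha$ after the $k$-th tooth, imposes a Dirichlet condition at the cut, and applies the tree bound of \autoref{thm:BKKM_upper} to the truncated comb, whose volume grows like $k^{1-\alpha}$; your explicit Dirichlet--Neumann sine profiles on the $k$ longest teeth, combined with min--max over their span, are more elementary, give the clean constant $\tfrac{\pi^2}{4}$, and hold for every $k\geq 1$ --- though this localized construction cannot see the logarithmic gain at $\alpha=1$, where the paper's volume-based bound still yields $k^2/\ln(k)^2$. For the lower bound the paper cuts the comb into a finite head and an infinite tail with Neumann conditions at the cut, bounds $\inf\sigma$ of the tail by the same Hardy/weighted-volume estimate you use on $T_0$, verifies via \autoref{thm:BKKM_upper} that the lowest $k$ eigenvalues of the disjoint union come from the head, and then invokes the tree lower bound $\lambda_k\geq k^2\pi^2/(4L^2)$ of \autoref{thm:BKKM_lower}; you instead run a Courant--Fischer bracketing argument with a partition into $O(M^{1/(4\alpha-2)})$ cells each having Poincar\'e constant $\gtrsim M$. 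The two proofs agree on the essential quantities: your tail cutoff $N_*\sim M^{1/(2\alpha-1)}$ becomes $k^2$ after setting $M\sim k^{4\alpha-2}$, exactly the paper's choice $n\sim Ck^2$, and your chunk count $\sqrt{M}\int_{N_{**}}^{N_*}n^{-\alpha}\,\mathrm{d}n\sim M^{1/(4\alpha-2)}$ reproduces the exponent the paper extracts from $|\Graph_{\alpha,0,n-1}|\sim n^{1-\alpha}$. What your version buys is independence from the global tree estimates of \cite{BerKenKur17}: only the one-dimensional Poincar\'e--Wirtinger inequality and the elementary bound $\lambda_2\geq\pi^2/L^2$ for a connected compact graph are needed, at the price of the more delicate variable-size chunking in your region (ii), whose exponent arithmetic ($\tfrac12+\tfrac{1-\alpha}{2\alpha-1}=\tfrac{1}{4\alpha-2}$) I have checked and which does close correctly.
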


\begin{rem}
We stress that while both bounds differ and we do not know if either of them is sharp, both are polynomial and the upper one rules out quadratic eigenvalue growth as in classical Weyl asymptotics. Indeed, the lower bound interpolates between exponent $2$ and exponent $0$: this suggests that the lower bound might be the optimal one.
We will see in the next section that both bounds rely on surgery principles and delicate eigenvalue estimates on finite metric graphs from~\cite{BerKenKur17}.
\end{rem}

Furthermore, we can infer an upper bound in the boundary case $\alpha = 1$ where we obtain upper and lower bounds that coincide up to a logarithmic term:

\begin{theo}
\label{theo:2}
	For the diagonal comb $\mGraph_\alpha$ with parameter $\alpha = 1$, there are $c,C > 0$ such that for sufficiently large $k$
	\[
	c \frac{k^2}{\ln(k)^4}
	\leq
	\lambda_k(\mGraph_1)
	\leq
	C \frac{k^2}{\ln(k)^2}.
	\]
\end{theo}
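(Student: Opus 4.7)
My plan is to prove the upper and lower bounds separately, using two different surgery arguments.

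For the \emph{upper bound}, I would use explicit test functions on individual teeth. The $n$-th tooth is an interval of length $1/n$; its Dirichlet--Neumann eigenfunctions (Dirichlet at the foot, Neumann at the pendant tip) are $\sin\bigl((j-\tfrac12)\pi n x\bigr)$ with eigenvalues $(j-\tfrac12)^2\pi^2 n^2$ for $j \geq 1$. Since these vanish at the foot, extending by zero to the rest of $\Graph_1$ produces valid $H^1(\Graph_1)$-functions, supported on pairwise disjoint teeth and hence linearly independent. Counting pairs $(n,j)$ with eigenvalue $\leq \Lambda$ gives
$$\sum_{n\leq 2\sqrt{\Lambda}/\pi}\bigl\lfloor \sqrt{\Lambda}/(\pi n) + \tfrac12\bigr\rfloor \;\asymp\; \frac{\sqrt{\Lambda}}{\pi}\,\ln\sqrt{\Lambda};$$
setting this count equal to $k$ and solving for $\Lambda$ yields $\lambda_k(\Graph_1) \lesssim k^2/(\ln k)^2$ by the min-max principle.

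For the \emph{lower bound}, I would perform Neumann bracketing (decoupling with Kirchhoff--Neumann conditions on each side) at the backbone point $v_N$ corresponding to $1/N$, with $N = N(k)$ to be optimized. This splits $\Graph_1$ into a compact tree $R_N$ (the teeth indexed by $n \leq N$ plus the backbone $[1/N,1]$, of total length $|R_N| \asymp \ln N$) and an infinite-volume tail $L_N$ (teeth $n > N$ plus backbone $(0,1/N]$, retaining the Dirichlet condition at the graph end $\gamma_0$, cf.\ Remark~\ref{rem:automatic-dirichlet}). Bracketing gives $\lambda_k(\Graph_1) \geq \lambda_k(L_N \oplus R_N)$. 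On the tail, combining the continuity of $\phi$ with the estimates $|\phi_n(0)|^2 \leq (1/n)\,\|\phi'\|_{\mathrm{backbone}}^2$ at each foot and $\sum_{n > N} 1/n^2 \lesssim 1/N$ yields a Poincar\'e-type inequality $\|\phi\|_{L_N}^2 \lesssim (1/N)\,\|\phi'\|_{L_N}^2$, hence $\lambda_1(L_N) \gtrsim N$. On the finite tree $R_N$, a Weyl-type lower bound from~\cite{BerKenKur17} yields an estimate of the form $\lambda_k(R_N) \gtrsim k^2/(\ln N)^p$ for suitable $p\geq 2$. Choosing $N$ so that $\lambda_1(L_N) \geq \lambda_k(R_N)$ ensures that the first $k$ eigenvalues of the decoupled operator all come from $R_N$, and balancing the logarithmic factors then gives $\lambda_k(\Graph_1) \gtrsim k^2/(\ln k)^4$.

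The main obstacle is producing the finite-graph estimate on $\lambda_k(R_N)$ that is sufficiently effective throughout the relevant range of $k$: the comb $R_N$ has many very short backbone edges (lengths $\sim 1/n^2$) but also carries one long tooth of length $1$ whose Dirichlet--Neumann modes sit near the bottom of the spectrum, so a uniform Weyl-type bound must accommodate both scales simultaneously. Tracking logarithmic losses through the subsequent optimization over $N$ -- and, in particular, deciding whether the gap between the exponents $(\ln k)^{-2}$ (upper) and $(\ln k)^{-4}$ (lower) reflects genuine slack of this surgery approach or a true feature of the spectrum at the critical parameter $\alpha = 1$ -- constitutes the delicate technical part of the argument.
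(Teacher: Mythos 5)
Your \emph{upper bound} is correct but takes a genuinely different route from the paper. The paper truncates $\Graph_1$ after the $k$-th tooth, imposes a Dirichlet condition at the cut, and applies the bound $\lambda_k \le (k-2+|D|+\tfrac{|N|}{2})^2\pi^2/|\mGraph|^2$ of \cite[Theorem 4.9]{BerKenKur17} to the resulting compact tree, whose volume is $\gtrsim \ln k$. You instead exhibit an explicit $k$-dimensional subspace of $H^1(\Graph_1)$ spanned by Dirichlet--Neumann modes on individual teeth; since modes on distinct teeth, and distinct modes on the same tooth, are orthogonal both in $L^2$ and in the energy form, the Rayleigh quotient on the span is at most $\Lambda$, and the count $\#\{(n,j): (j-\tfrac12)^2\pi^2 n^2\le \Lambda\}\asymp \sqrt{\Lambda}\,\ln\sqrt{\Lambda}$ gives $\lambda_k\lesssim k^2/(\ln k)^2$ by min--max. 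This is more elementary (no surgery theorem needed) and makes transparent where the logarithm comes from: it is the Weyl counting function of the decoupled family of teeth.

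Your \emph{lower bound} follows the same strategy as the paper -- Neumann bracketing at the $N$-th tooth, $\lambda_1(L_N)\gtrsim N$ via the same Hardy-type computation as \autoref{lem:lower_2}, then the length-only bound $\lambda_k(R_N)\ge k^2\pi^2/(4|R_N|^2)$ of \autoref{thm:BKKM_lower} on the compact part -- but as written it has a gap: to ``choose $N$ so that $\lambda_1(L_N)\ge\lambda_k(R_N)$'' you need an \emph{upper} bound on $\lambda_k(R_N)$ in terms of $k$ and $N$, and your sketch never produces one; without it the choice $N=N(k)$ cannot be quantified. This is exactly the content of the paper's \autoref{lem:lower_3}, which gives $\lambda_k(R_N)\le Ck^2/(\ln N)^2$, whence $N\asymp k^2$ suffices, $\ln N\asymp\ln k$, and the result follows. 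Your ``main obstacle'' paragraph points at the wrong ingredient: the Weyl-type \emph{lower} bound on $R_N$ is immediate from \autoref{thm:BKKM_lower} and sees only the total length $|R_N|\asymp\ln N$ (the short backbone edges and the long tooth are irrelevant to it); the genuinely needed extra input is the upper bound just described. Note that your own tooth-mode counting, applied to $R_N$ (which already contains all teeth of length $\ge \pi/(2\sqrt{\Lambda})$ once $N\gtrsim k/\ln k$), supplies precisely this missing estimate, so your toolkit does close the gap. Finally, concerning your closing question: the discrepancy between $(\ln k)^{-2}$ and $(\ln k)^{-4}$ is slack in the bookkeeping rather than a feature of the method -- with the ingredients above the same decomposition yields $\lambda_k(\Graph_1)\ge ck^2/(\ln k)^2$, matching the upper bound up to constants.
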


The upper bounds in~\autoref{theo:1} and~\autoref{theo:2} are proved in~\autoref{thm:upper_bound}, the lower bounds in~\autoref{thm:lower_bound}. For completeness' sake, we note that for $\alpha$ above the volume transition threshold $\alpha = 1$ we can recover the correct power behaviour in the Weyl asymptotics.

\begin{theo}
	\label{theo:3}
	For the diagonal comb $\mGraph_\alpha$ with parameter $\alpha \in (1,\infty)$, for all $k \geq 1$ we have
\[
\frac{\pi^2 k^2}{4|\Graph_\alpha|^2}
\leq
	\lambda_k(\mGraph_\alpha)
\le 
\frac{\pi^2 k^2}{4}.
	\]
\end{theo}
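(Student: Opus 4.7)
The plan is to prove the two inequalities by essentially independent arguments.

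\textbf{Upper bound.} To show $\lambda_k(\mGraph_\alpha) \leq \pi^2 k^2/4$, I would apply the min-max principle to a $k$-dimensional trial space supported on a path of length $2=\diam(\mGraph_\alpha)$ in the graph. Concretely, take $P$ to be the backbone $(0,1]$ (from the end $\gamma_0$ to the foot of tooth~$1$) concatenated with tooth~$1$ itself (also of length~$1$), and parametrise $P$ as $[0,2]$. For $j=1,\ldots,k$, set $\psi_j(x):=\sin(\pi j x/2)$ on $P$, and extend each $\psi_j$ to every remaining tooth $n\geq 2$ by the constant value $\sin(\pi j/(2 n^\alpha))$ matched at the corresponding foot on the backbone; continuity across every foot is automatic, so $\psi_j\in H^1(\mGraph_\alpha)$ (the $H^1$-norm contribution from the teeth $n\geq 2$ is dominated by $\sum_n n^{-3\alpha}<\infty$). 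The key point is that the constant extensions add to the $L^2$-mass while contributing nothing to the Dirichlet energy, so that for any $\phi\in V_k:=\operatorname{span}(\psi_1,\ldots,\psi_k)\setminus\{0\}$,
\[
\frac{\|\phi'\|_{L^2(\mGraph_\alpha)}^2}{\|\phi\|_{L^2(\mGraph_\alpha)}^2}
\;\leq\;
\frac{\|\phi'\|_{L^2(0,2)}^2}{\|\phi\|_{L^2(0,2)}^2}
\;\leq\;
\frac{\pi^2 k^2}{4}
\]
by the orthogonality of $\{\sin(\pi j x/2)\}_{j=1}^k$ on $[0,2]$, and the claim follows from the variational characterisation of $\lambda_k$.

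\textbf{Lower bound.} For $\lambda_k(\mGraph_\alpha)\geq \pi^2 k^2/(4|\mGraph_\alpha|^2)$, I would invoke the Nicaise/Friedlander-type lower bound $\lambda_k(\Graph)\geq \pi^2(k-1)^2/|\Graph|^2$ for the Kirchhoff--Neumann Laplacian on a compact connected metric graph $\Graph$ (see, e.g., the surgery principles in~\cite{BerKenKur17}), which already implies the stated inequality whenever $k\geq 2$, since $(k-1)^2\geq k^2/4$ in that range. Because $\mGraph_\alpha$ has infinitely many edges but finite total length (for $\alpha>1$), one must extend this classical bound from the compact to the present setting; the cleanest route is an exhaustion, approximating $\mGraph_\alpha$ by the compact truncations $\mGraph_\alpha^{(N)}$ consisting of the first $N$ teeth together with the backbone piece $[1/N^\alpha,1]$, applying the Nicaise inequality on each $\mGraph_\alpha^{(N)}$, and passing to the limit using the monotone convergence of Neumann eigenvalues together with $|\mGraph_\alpha^{(N)}|\to|\mGraph_\alpha|$.

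\textbf{Main obstacle.} The delicate step is the exhaustion in the lower bound: one has to pick a monotone approximation scheme (for instance, Neumann-cutting at the truncation interface) so that both the eigenvalues and the total volumes converge correctly and the Nicaise-type inequality survives in the limit. The upper bound, by contrast, requires no limiting procedure, since the trial functions $\psi_j$ belong to $H^1(\mGraph_\alpha)$ by construction.
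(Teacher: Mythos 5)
Your upper bound is correct and takes a genuinely different, more self-contained route than the paper. The paper truncates $\mGraph_\alpha$ between the $k$-th and $(k+1)$-st teeth, uses domain monotonicity for the Dirichlet-truncated graph, trades the Dirichlet condition at the cut point for a Neumann one at the cost of one index via \cite[Theorem~3.4]{BerKenKur19}, and then invokes Rohleder's diameter bound \cite[Theorem~3.4]{Roh17} together with $\diam(\mGraph_{\alpha,k})=2$. Your explicit trial space of sines on a diametral path, extended constantly to the remaining teeth, reproves in this instance exactly what the diameter bound delivers, with no truncation or surgery: the orthogonality of $\{\sin(\pi jx/2)\}_{j=1}^k$ and of their derivatives on $[0,2]$, the observation that the constant extensions enlarge only the denominator of the Rayleigh quotient, and the $H^1$-membership check are all sound. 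This is arguably cleaner for this specific family, at the price of not producing a quotable general lemma.

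The lower bound is where the gap lies. The paper simply cites \cite[Theorem~4.1]{DufKenMug22}, which is formulated directly for infinite graphs of finite volume, so no limiting procedure is needed; your route --- the Nicaise bound $\lambda_k \geq \pi^2(k-1)^2/|\Graph|^2$ on compact truncations plus an exhaustion --- does not go through as sketched. First, ``monotone convergence of Neumann eigenvalues'' under exhaustion is not something you can appeal to: Neumann eigenvalues are not monotone under domain inclusion, and restricting an $H^1(\mGraph_\alpha)$-function to a truncation shrinks numerator and denominator of the Rayleigh quotient alike, giving no one-sided comparison. Second, the concrete scheme you propose (Neumann-cutting at the interface) yields $\lambda_k(\mGraph_\alpha)\geq\lambda_k\bigl(\mGraph_\alpha^{(N)}\sqcup\mathcal{T}_N\bigr)$, where the tail $\mathcal{T}_N$ is itself a non-compact infinite graph carrying its own zero eigenvalue; merging the two spectra therefore costs at least one further index, so at best you reach $\lambda_{k-1}(\mGraph_\alpha^{(N)})\geq \pi^2(k-2)^2/|\mGraph_\alpha|^2$, which falls short of the claimed $\pi^2 k^2/(4|\mGraph_\alpha|^2)$ for $k=2,3$, and you would in addition need a quantitative lower bound on the spectral gap of $\mathcal{T}_N$ --- essentially the statement you are trying to prove. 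Finally, $k=1$ is not covered by your reduction at all; note that for $\alpha>1$ the constant function lies in $H^1(\mGraph_\alpha)$, so this case rests entirely on the enumeration convention of the cited result rather than on any Nicaise-type estimate. To repair the argument you should either quote a version of the Nicaise inequality valid for infinite finite-volume graphs (which is precisely \cite[Theorem~4.1]{DufKenMug22}) or run its variational proof directly on $\mGraph_\alpha$ rather than on truncations.
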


\newpage
\section{Proofs}

\subsection{Upper bounds}

\begin{theo}
	\label{thm:upper_bound}
	For every $\alpha \in (\frac{1}{2}, 1)$, there is $C = C(\alpha) > 0$ such that for sufficiently large $k$
	\[
	\lambda_k(\mGraph_\alpha)
	\leq
	C k^{2 \alpha}.
	\]
	In the case $\alpha = 1$, there is $C > 0$ such that for sufficiently large $k$
	\[
	\lambda_k(\mGraph_\alpha)
	\leq
	C \frac{k^2}{\ln(k)^2}.
	\]
\end{theo}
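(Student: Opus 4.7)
The plan is to apply the min–max principle with explicit, compactly supported test functions localised on the teeth of $\mGraph_\alpha$. For each tooth $\me_n$ of length $\ell_n = n^{-\alpha}$ attached to the backbone at a single basepoint $\mv_n$, any $H^1(\me_n)$-function vanishing at $\mv_n$ extends by zero to an element of $H^1(\mGraph_\alpha)$. On $\me_n$, viewed as an interval with Dirichlet condition at the base and Neumann condition at the tip, the Laplacian has eigenvalues
\[
\mu_{n,j} \;=\; \frac{(2j-1)^2 \pi^2}{4 \ell_n^2} \;=\; \frac{(2j-1)^2 \pi^2 n^{2\alpha}}{4}, \qquad j = 1, 2, \ldots,
\]
with $L^2(\me_n)$-orthogonal eigenfunctions $\phi_{n,j}$. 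Extended by zero, the whole family $\{\phi_{n,j}\}_{n, j \geq 1}$ is mutually $L^2(\mGraph_\alpha)$-orthogonal (disjoint supports on different teeth) and each $\phi_{n,j}$ has Rayleigh quotient exactly $\mu_{n,j}$.

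For $\Lambda > 0$, set $N(\Lambda) := \#\{(n, j) : \mu_{n,j} \leq \Lambda\}$. Taking the span of the corresponding $\phi_{n,j}$ as a test space in the min–max principle yields
\[
\lambda_{N(\Lambda)}(\mGraph_\alpha) \;\leq\; \Lambda,
\]
so it suffices to estimate $N(\Lambda)$ from below and invert. The inequality $\mu_{n,j} \leq \Lambda$ rewrites as $2j - 1 \leq 2\sqrt{\Lambda}/(\pi n^\alpha)$, which requires $n \leq N_0 := \lfloor (2\sqrt{\Lambda}/\pi)^{1/\alpha} \rfloor$ and then admits on the order of $\sqrt{\Lambda}/(\pi n^\alpha)$ values of $j$. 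Summing over $n$ gives
\[
N(\Lambda) \;\gtrsim\; \frac{\sqrt{\Lambda}}{\pi} \sum_{n=1}^{N_0} \frac{1}{n^\alpha}.
\]

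For $\alpha \in (\tfrac{1}{2},1)$, the sum is $\asymp N_0^{1-\alpha}/(1-\alpha) \asymp C_\alpha \Lambda^{(1-\alpha)/(2\alpha)}$, so that $N(\Lambda) \asymp C_\alpha' \Lambda^{1/(2\alpha)}$; inverting yields $\lambda_k(\mGraph_\alpha) \leq C k^{2\alpha}$. For the critical case $\alpha = 1$, the sum is $\asymp \ln(N_0) \asymp \tfrac{1}{2}\ln\Lambda$, producing $N(\Lambda) \asymp \sqrt{\Lambda}\,\ln(\Lambda)/(2\pi)$; solving $k \asymp \sqrt{\Lambda}\,\ln\Lambda$ and using $\ln\Lambda \asymp 2\ln k$ for large $k$ gives $\lambda_k(\mGraph_1) \leq C k^2/\ln(k)^2$, as claimed.

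The only mild obstacle is the arithmetic of inverting $N(\Lambda)$, in particular at $\alpha = 1$ where the logarithmic correction has to be tracked with some care through the substitution $\ln\Lambda \asymp 2\ln k$. Otherwise no spectral-geometric surgery is needed here: restricting to single-tooth test functions already captures the correct order of growth. By contrast, the lower bound in \autoref{thm:lower_bound} cannot be obtained in this direct fashion and will require the finer surgery and compact-graph estimates alluded to in the introduction.
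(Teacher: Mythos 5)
Your proof is correct, and it takes a genuinely different route from the paper. The paper truncates $\mGraph_\alpha$ after the $k$ largest teeth, imposes a Dirichlet condition at the cut, and applies the Berkolaiko--Kennedy--Kurasov--Mugnolo upper bound $\lambda_k \le \bigl(k-2+\lvert D\rvert + \tfrac{\lvert N\rvert}{2}\bigr)^2 \pi^2/\lvert\mGraph\rvert^2$ to the resulting compact tree, so everything reduces to a lower bound on the volume $\sum_{l\le k} l^{-\alpha}$ of the truncation. You instead decouple the teeth entirely and feed the explicit Dirichlet--Neumann eigenfunctions of each tooth, extended by zero, into the min--max principle; this is legitimate because functions on distinct teeth have disjoint supports and eigenfunctions on the same tooth are orthogonal both in $L^2$ and in the form, so the Rayleigh quotient on the span is controlled by $\max \mu_{n,j}$, and pure discreteness of the spectrum for $\alpha>\tfrac12$ lets you conclude $\lambda_{N(\Lambda)}\le\Lambda$. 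It is not a coincidence that both arguments turn on the same partial sum $\sum_{n\le N} n^{-\alpha}$ (volume of the truncation for the paper, weight in the counting function for you), which is why the exponents agree, including the logarithm at $\alpha=1$. Your version is more elementary and self-contained --- no surgery and no compact-tree eigenvalue estimates are needed for the upper bound --- and it gives the slightly stronger statement of a lower bound on the eigenvalue counting function $N(\Lambda)\gtrsim \Lambda^{1/(2\alpha)}$ (resp.\ $\sqrt{\Lambda}\ln\Lambda$); what it does not give is the clean explicit constants such as $9(1-\alpha)^2$ that drop out of the paper's computation, and it discards the backbone entirely, which is harmless for the order of growth but would matter if one wanted sharp leading coefficients. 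The one point to state carefully in a final write-up is the inversion step: you should fix $\Lambda_k = A k^{2\alpha}$ (resp.\ $A k^2/\ln(k)^2$) with $A$ large, verify $N(\Lambda_k)\ge k$ for large $k$, and then conclude by monotonicity of the eigenvalues; as written the repeated use of $\asymp$ hides the fact that only the lower bound on $N(\Lambda)$ is needed and used.
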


We will use the following special case of \cite[Theorem 4.9]{BerKenKur17}.

\begin{theo}
	\label{thm:BKKM_upper}
	Let $\mGraph$ be a connected, compact metric tree graph with Dirichlet or Neumann conditions at all degree one vertices and Kirchhoff--Neumann conditions elsewhere.
Then
	\begin{equation}\label{eq:upper-BKKM1}
	\lambda_k(\mGraph)
	\leq
	\left(
	k - 2 + \lvert D \rvert + \frac{ \lvert N \rvert}{2}
	\right)^2
	\frac{\pi^2}{|\mGraph|^2},
	\end{equation}
	where
	$\lvert D \rvert$ and $\lvert N \rvert$ denote the number of degree one vertices with Dirichlet or Neumann conditions, respectively.
\end{theo}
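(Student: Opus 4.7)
The plan is to invoke the min-max principle and exhibit, for every $k\geq 1$, an explicit $k$-dimensional trial subspace of the form domain (i.e., $H^1(\mGraph)$ functions vanishing at the $|D|$ Dirichlet leaves) whose Rayleigh quotient is uniformly bounded by $(k-2+|D|+|N|/2)^2\pi^2/|\mGraph|^2$. I would first verify the base case when $\mGraph$ is a single edge of length $L$: the three admissible leaf configurations (DD, DN, NN) admit explicit Fourier eigenfunctions $\sin(j\pi x/L)$, $\sin((j-\tfrac12)\pi x/L)$, and $\cos((j-1)\pi x/L)$ respectively, and the inequality becomes an equality for every $k$ in each case.

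For a general tree I would proceed by induction on the number of leaves $|D|+|N|$, reducing to the interval via a surgery step. Pick a branching vertex $v$ of degree $\geq 3$ all of whose neighbours but one are leaves (such $v$ exists in any non-path tree, by taking one of maximal depth from a fixed root), select two of its pendant edges $e_1, e_2$ of lengths $\ell_1, \ell_2$, and replace them by a single pendant edge $e'$ of length $\ell_1+\ell_2$ whose boundary condition at the new leaf is determined by an even or odd reflection of the data at one of the old leaves. The effect on the $k$-th eigenvalue, namely $\lambda_k(\mGraph) \leq \lambda_k(\mGraph')$, would be obtained by chaining elementary surgery moves (gluing two vertices, inserting or removing a degree-two vertex, exchanging Dirichlet for Neumann at a pendant via even reflection), each with an established monotonicity effect on $\lambda_k$ in the metric-graph literature. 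The new graph $\mGraph'$ satisfies $|\mGraph'|=|\mGraph|$, has one fewer leaf, and $|D'|+|N'|/2 \leq |D|+|N|/2$, so the inductive hypothesis closes the argument.

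As an alternative I would consider a direct \emph{transplantation} argument. Fix a depth-first traversal $\gamma:[0, 2|\mGraph|]\to \mGraph$ of the tree, traversing each edge exactly twice; sinusoidal functions on $[0, 2|\mGraph|]$ that are $\gamma$-equivariant (taking equal values on all preimages of each point of $\mGraph$) descend to elements of $H^1(\mGraph)$ and automatically satisfy Kirchhoff--Neumann at internal vertices. The task is then to extract, among sinusoids of frequency at most $(k-2+|D|+|N|/2)\pi/|\mGraph|$, a $k$-dimensional subspace of such descending functions that moreover vanish at Dirichlet-leaf preimages. The coefficient $|D|+|N|/2$ would emerge because a Dirichlet leaf imposes a half-period's worth of constraint on a sinusoid (a node) while a Neumann leaf imposes only a quarter-period (an extremum), matching the interval endpoint computation in the base case.

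The main obstacle in either approach is the accounting of the factor $|N|/2$ and the correct handling of reflection symmetries needed to combine two pendant leaves of possibly different types (Dirichlet, Neumann, or mixed) into a single new boundary condition. For the surgery approach, this translates into verifying the monotonicity $\lambda_k(\mGraph)\leq\lambda_k(\mGraph')$ through a carefully chosen sequence of elementary moves whose cumulative effect preserves $|\mGraph|$ and weakly decreases $|D|+|N|/2$; for the transplantation approach it requires a careful dimension count distinguishing the parity of $|N|$ and ensuring that enough equivariant low-frequency sinusoids exist. Handling this bookkeeping uniformly across all tree topologies is where the main technical effort lies.
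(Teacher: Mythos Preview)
The paper does not prove this theorem at all: it is quoted as a black box, introduced as ``the following special case of \cite[Theorem~4.9]{BerKenKur17}''. So there is no proof here to compare your proposal against; any argument you supply would be supplementary to the present paper.

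That said, your first (surgery/induction) approach contains a genuine error in the direction of the key inequality. You claim that replacing two pendant edges $e_1,e_2$ at a branch vertex by a single pendant of length $\ell_1+\ell_2$ produces a graph $\mGraph'$ with $\lambda_k(\mGraph)\le\lambda_k(\mGraph')$. This is false already for the equilateral $3$-star with Neumann tips: there $\lambda_2=\pi^2/(4\ell^2)$, whereas after merging two legs one obtains a Neumann interval of length $3\ell$ with $\lambda_2=\pi^2/(9\ell^2)$, so the eigenvalue \emph{drops}. The same happens with all-Dirichlet tips. In fact the surgery you describe is an ``unfolding'' move, and the standard monotonicity for such moves goes the other way ($\lambda_k(\mGraph')\le\lambda_k(\mGraph)$), which is useless for an upper bound on $\lambda_k(\mGraph)$. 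The sentence ``$|D'|+|N'|/2\le |D|+|N|/2$, so the inductive hypothesis closes the argument'' should already have been a warning sign: were your inequality correct, you would be proving a strictly stronger bound than the theorem claims, uniformly for every tree, which cannot be right since the stated bound is sharp on stars.

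Your second (transplantation via depth-first traversal/doubling) sketch is much closer to the techniques actually used in \cite{BerKenKur17} to obtain inequalities of this type, and the heuristic that a Dirichlet leaf costs a half-period while a Neumann leaf costs a quarter-period is the correct intuition for the coefficient $|D|+|N|/2$. If you want to supply a proof, pursue that route and carry out the dimension count carefully; the surgery induction as written cannot be repaired without reversing the inequality, which defeats its purpose.
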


\begin{rem}
As will become clear in the following proof, the main reason why we resort to the upper bound in~\eqref{eq:upper-BKKM1}, rather than the perhaps simpler estimate in \cite[Theorem~3.2]{Roh17} (which is based on $\lvert \mE \rvert$, the number of edges), is that for the relevant class of truncated combs we are going to study the factor
$\left(
	k - 2 + \lvert D \rvert + \frac{ \lvert N \rvert}{2}
	\right)$ grows linearly in $k$, rather than proportional to $k \lvert \mE \rvert$.
	\end{rem}

\begin{proof}[Proof of \autoref{thm:upper_bound}]
	We cut $\mGraph_\alpha$ in the middle of the edge connecting the bases of the $k$-th largest and the $(k+1)$-st largest tooth. 
	We obtain a finite part $\mGraph_{\alpha,k}$, containing the $k$ largest teeth and an infinite rest.
	Denoting the eigenvalues of the Laplacian on $\mGraph_{\alpha,k}$ with Dirichlet conditions at the cut point by $\lambda_k(\mGraph_{\alpha,k})$ we can certainly estimate
	\[
	\lambda_k(\mGraph_\alpha)
	\leq
	\lambda_k(\mGraph_{\alpha,k}).
	\]
	Using \autoref{thm:BKKM_upper}, we obtain for sufficiently large $k$
	\[
	\lambda_k(\mGraph_{\alpha,k})
	\leq
	\left(
		k + \frac{k - 2}{2}
	\right)^2
	\frac{\pi^2}{|\mGraph_{\alpha,k}|^2}
	\leq
	\frac{9}{4}
	\frac{k^2}{|\mGraph_{\alpha,k}|^2},
	\]
	where 
	\[
	|\mGraph_{\alpha,k}|
	=
	\sum_{l = 1}^k
		l^{- \alpha}	
	+
		1 - \frac{k^{- \alpha} - (k + 1)^{- \alpha}}{2} 
	\]
	is its total volume.
	We estimate, again assuming $k$ sufficiently large,
	\begin{equation}\label{eq:estim-trunc}
	|\mGraph_{\alpha,k}|
	\geq
	\sum_{l = 1}^k
	l^{- \alpha}	
	\geq
	\int_1^k
	x^{-\alpha}
	\mathrm{d} x
	=
	\begin{cases}
	\frac{1}{1 - \alpha}
	\left[
	k^{1 - \alpha}
	-
	1
	\right]
	\geq 
	\frac{1}{2(1 - \alpha)}
	k^{1 - \alpha}
	&
	\text{if $\alpha \in (\frac{1}{2}, 1)$},\\
	\ln(k)
	&
	\text{if $\alpha = 1$}.
	\end{cases}
	\end{equation}
	We conclude 
	\[
	\lambda_k(\mGraph_\alpha)
	\leq
	\begin{cases}
		9 (1 - \alpha)^2 k^{2 \alpha}
		&
		\text{if $\alpha \in (\frac{1}{2}, 1)$},\\
		\frac{9}{4} \frac{k^2}{\ln(k)^2}
		&
		\text{if $\alpha = 1$}.
	\end{cases}
	\qedhere
\]
\end{proof}

\subsection{Lower bounds}

We turn to lower bounds and first cite the following consequence of \cite[Theorem~4.7]{BerKenKur17}.

\begin{theo}
	\label{thm:BKKM_lower}
	Let $\mGraph$ be a compact metric tree graph with at least one Dirichlet vertex.
	Then, 
	\[
	\lambda_k(\mGraph)
	\geq
		\frac{k^2 \pi^2}{4 |\mGraph|^2}
	\quad
	\text{for all $k \geq 1$}.
	\]
\end{theo}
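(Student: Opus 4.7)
The plan is to reduce the general statement to the extremal case of an interval of length $L := |\mGraph|$ with a Dirichlet condition at one endpoint and a Neumann condition at the other, whose $k$-th eigenvalue equals $(2k-1)^2 \pi^2/(4L^2)$ and in particular is $\geq k^2 \pi^2/(4L^2)$. Thus it suffices to show that, among all compact metric trees of a fixed volume $L$ carrying at least one Dirichlet vertex, this interval minimizes every $\lambda_k$.

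My first approach would proceed by surgery. I would look for elementary moves on $\mGraph$ -- for instance, replacing a subtree hanging at a vertex by a single pendant edge of the same total length, or ``sliding'' a branch point along the edge towards the Dirichlet vertex -- that (i) preserve the volume and the Dirichlet vertex and (ii) decrease every eigenvalue monotonically. Iterating such moves should collapse $\mGraph$ to the extremal interval. For the ground state, monotonicity under these operations can be verified by exhibiting an explicit rearrangement of the first eigenfunction as a Rayleigh-quotient competitor. For general $k$ one has to transport an entire $k$-dimensional test subspace of $H^1$ to the new graph while simultaneously controlling the Dirichlet energy, the $L^2$-norm, and linear independence. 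Doing this uniformly in $k$ is precisely the main technical hurdle, and is the content of the tree-symmetrization results of \cite{BerKenKur17} underlying \autoref{thm:BKKM_lower}.

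An alternative route I would pursue in parallel is an inductive nodal-domain argument. First I would establish the base case: for any compact metric tree $\mGraph$ with Dirichlet vertex $v_0$,
\[
\lambda_1(\mGraph) \,\geq\, \frac{\pi^2}{4|\mGraph|^2},
\]
via a distance-monotonic rearrangement around $v_0$ of an admissible test function onto $[0,L]$ with Dirichlet at $0$, reducing to the one-dimensional Poincaré inequality with sharp constant. Given a $k$-th eigenfunction $\phi_k$, I would then invoke Courant's nodal theorem -- exact on trees since their first Betti number vanishes -- to decompose $\mGraph$ into $m \leq k$ nodal domains $\Omega_1,\dots,\Omega_m$. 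On each $\Omega_j$ the restriction of $\phi_k$ is a ground state of the Laplacian with Dirichlet conditions on $\partial \Omega_j$, so the base case gives $|\Omega_j| \geq \pi/(2\sqrt{\lambda_k})$. Summing,
\[
|\mGraph| \,=\, \sum_{j=1}^m |\Omega_j| \,\geq\, \frac{m\pi}{2\sqrt{\lambda_k}},
\]
hence $\lambda_k(\mGraph) \geq m^2 \pi^2/(4|\mGraph|^2)$.

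The step I expect to be the main obstacle is, in either approach, the passage from $k=1$ to general $k$: in the surgery approach, the transfer of a full $k$-dimensional test subspace through the moves; in the nodal-domain approach, the need to replace $m$ by $k$ in the last display, that is, to promote Courant's bound to an equality in the setting of metric trees with a single Dirichlet vertex. Overcoming the latter gap typically requires either a refined Courant-type theorem tailored to trees with a Dirichlet vertex, or an iteration of the base inequality across the spectrum $\lambda_1,\dots,\lambda_k$ coupled with a careful accounting of dimensions of approximating subspaces.
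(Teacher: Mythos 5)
The paper does not actually prove this statement: it is imported as a direct consequence of \cite[Theorem~4.7]{BerKenKur17}, so there is no internal proof to compare against, and your first approach in effect does the same thing --- it identifies the correct reference and defers the $k$-dimensional symmetrization step to it. More seriously, the reduction you set up for that approach rests on a false premise: the Dirichlet--Neumann interval does \emph{not} minimize $\lambda_k$ among trees of volume $L$ with a Dirichlet vertex once $k\ge 2$. Consider a $3$-star with a Dirichlet leaf at the end of an edge of length $\varepsilon$ and two edges of length $(L-\varepsilon)/2$ with Neumann tips; the function equal to $0$ on the short edge and to $\pm\sin\bigl(\pi x/(L-\varepsilon)\bigr)$ (opposite signs, $x$ the distance from the centre) on the two long edges is an eigenfunction with eigenvalue $\pi^2/(L-\varepsilon)^2$, and it changes sign, so $\lambda_2 \le \pi^2/(L-\varepsilon)^2 \to \pi^2/L^2 = 2^2\pi^2/(4L^2)$ as $\varepsilon\to 0$, strictly below the interval's $\lambda_2 = 9\pi^2/(4L^2)$. (This also shows the constant in the theorem is sharp for $k=2$, with star-like near-extremizers.) Consequently no sequence of volume-preserving, eigenvalue-decreasing surgeries can terminate at the interval, and the first approach cannot be repaired in the form you propose.

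Your second approach is the right skeleton: the base case $\lambda_1 \ge \pi^2/(4L^2)$ for any connected graph with a Dirichlet vertex (a Nicaise-type inequality), the observation that on each nodal domain the eigenfunction is a positive ground state of a problem with a Dirichlet point, and the summation $L \ge \sum_j |\Omega_j| \ge m\pi/(2\sqrt{\lambda_k})$ are all sound. The one remaining gap is exactly the one you name: you need the $k$-th eigenfunction of a metric tree to have \emph{at least} $k$ nodal domains, whereas Courant's theorem gives the opposite inequality. For trees this lower bound is in fact available --- the nodal count of the $n$-th eigenfunction equals $n$ when the eigenvalue is simple and the eigenfunction does not vanish at internal vertices (Pokornyi--Pryadiev, Schapotschnikow), and the degenerate cases can be reached by perturbing edge lengths and using continuity of $\lambda_k$ --- but as written the argument is incomplete, and supplying this genericity-plus-approximation step is of comparable difficulty to the cited result itself.
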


We prove some preparatory lemmas.
For $n \in \N$, we split $\mGraph_\alpha$ into two graphs:
\begin{itemize}
	\item
	An infinite, connected part $\mGraph_{\alpha, n, \infty}$, containing all teeth starting from the $n$-th largest, and the pieces of the backbone, connecting these teeth.
	\item
	A finite, connected part, $\mGraph_{\alpha, 0, n-1}$ consisting of the $(n - 1)$ largest teeth plus the part of the backbone connecting the base of the $n$-th  with the $(n-1)$-st largest tooth.
\end{itemize}

\begin{lemma}
	\label{lem:lower_1}
	Let $\Delta_{\alpha, n, \infty}$ and $\Delta_{\alpha, 0, n-1}$ denote the Laplacians with Kirchhoff--Neumann conditions at the cut point, respectively.
	If
	\[
	\inf \sigma(\Delta_{\alpha, n, \infty}) \geq \lambda_k(\Delta_{\alpha, 0, n - 1}),
	\] 
	then
	\[
	\lambda_k(\Delta_\alpha)
	\geq
	\lambda_k(\Delta_{\alpha, 0, n - 1}).
	\]
\end{lemma}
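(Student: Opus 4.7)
The statement is a standard Neumann bracketing (or ``cutting'') argument for metric graph Laplacians, combined with an elementary sorting of spectra under the hypothesis. Here is how I would carry it out.

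\textbf{Step 1: Decoupling via cutting.} The cut point at the middle of the edge between the bases of the $(n-1)$-st and $n$-th largest teeth is a point of $\mGraph_\alpha$. Cutting $\mGraph_\alpha$ there and imposing Kirchhoff--Neumann conditions at the two newly created degree-one vertices of $\mGraph_{\alpha,0,n-1}$ and $\mGraph_{\alpha,n,\infty}$ does not alter the quadratic form~\eqref{eq:form}, but enlarges its form domain: it replaces the global continuity constraint in $H^1(\mGraph_\alpha)$ by independent continuity constraints on each of the two pieces, so that
\[
H^1(\mGraph_\alpha)\subset H^1(\mGraph_{\alpha,0,n-1})\oplus H^1(\mGraph_{\alpha,n,\infty}).
\]
By the min-max principle applied to the quadratic form, this entails
\[
\lambda_k(\Delta_\alpha)\;\geq\; \lambda_k\bigl(\Delta_{\alpha,0,n-1}\oplus\Delta_{\alpha,n,\infty}\bigr)
\]
for every $k\geq 1$, where $\lambda_k$ of a self-adjoint operator is understood in the min-max sense (so it coincides with the $k$-th discrete eigenvalue as long as it lies strictly below the essential spectrum).

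\textbf{Step 2: Spectrum of the direct sum.} The spectrum of the orthogonal direct sum is simply the union (with multiplicities) of the individual spectra:
\[
\sigma\bigl(\Delta_{\alpha,0,n-1}\oplus\Delta_{\alpha,n,\infty}\bigr)=\sigma(\Delta_{\alpha,0,n-1})\cup\sigma(\Delta_{\alpha,n,\infty}).
\]
Since $\mGraph_{\alpha,0,n-1}$ is a finite compact graph, $\Delta_{\alpha,0,n-1}$ has purely discrete spectrum consisting of eigenvalues $0\le\lambda_1(\Delta_{\alpha,0,n-1})\le\lambda_2(\Delta_{\alpha,0,n-1})\le\dots$, each of finite multiplicity.

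\textbf{Step 3: Exploiting the separation hypothesis.} By assumption,
\[
\inf\sigma(\Delta_{\alpha,n,\infty})\;\geq\;\lambda_k(\Delta_{\alpha,0,n-1}),
\]
so every spectral value coming from $\Delta_{\alpha,n,\infty}$ is at least $\lambda_k(\Delta_{\alpha,0,n-1})$. Hence among the smallest $k$ elements of the combined spectrum, counted with multiplicity, the $k$-th one is exactly $\lambda_k(\Delta_{\alpha,0,n-1})$. In min-max terms this gives
\[
\lambda_k\bigl(\Delta_{\alpha,0,n-1}\oplus\Delta_{\alpha,n,\infty}\bigr)\;=\;\lambda_k(\Delta_{\alpha,0,n-1}).
\]
Combining this with the inequality from Step~1 yields the claim.

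\textbf{Main obstacle.} The mechanics are routine; the only point requiring care is the bookkeeping around $\inf\sigma(\Delta_{\alpha,n,\infty})$, which in principle could belong to the essential spectrum of the infinite piece. Because the min-max characterization of $\lambda_k$ caps out at the bottom of the essential spectrum, one has to check that the hypothesis places $\lambda_k(\Delta_{\alpha,0,n-1})$ at or below every spectral point of the infinite piece so that no essential-spectrum value contributes one of the first $k$ values of the direct sum. This is exactly what the inequality in the hypothesis encodes, so the argument goes through with no additional work. (In the range $\alpha\in(\tfrac12,1]$ the ambient operator $\Delta_\alpha$ has purely discrete spectrum by~\cite{DufKenMug22}, so $\lambda_k(\Delta_\alpha)$ on the left-hand side is an actual eigenvalue and no subtlety about the meaning of $\lambda_k$ arises there.)
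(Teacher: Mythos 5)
Your proof is correct and follows essentially the same route as the paper: cut at the Kirchhoff--Neumann point to decouple the graph (which only lowers eigenvalues by min-max), identify the spectrum of the resulting direct sum as the union of the two spectra, and use the separation hypothesis to conclude that the $k$-th smallest element of that union is $\lambda_k(\Delta_{\alpha,0,n-1})$. Your additional remarks on the essential spectrum of the infinite piece and the form-domain inclusion merely make explicit what the paper's terser argument leaves implicit.
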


\begin{proof}
	Cutting with Kirchhoff--Neumann conditions decreases the spectrum.
	Hence, the $k$-th eigenvalue of $\Delta_\alpha$ is bounded from below by the $k$-th smallest element of the set
	\[
	\sigma(\Delta_{\alpha, n, \infty}) \cup \sigma(\Delta_{\alpha, 0, n - 1}),
	\]
	counting multiplicities.
	But by assumption, this is $\lambda_k(\Delta_{\alpha, 0, n - 1}$).
\end{proof}

\begin{lemma}
	\label{lem:lower_2}
	For $\alpha \in (\frac{1}{2}, 1]$, we have
	\[
	\inf \sigma (\Delta_{\alpha, n, \infty})
	\geq
	\frac{2 \alpha - 1}{2}
	\cdot
	n^{2 \alpha - 1}.
	\]
\end{lemma}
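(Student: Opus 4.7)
The plan is to reduce the claim to a Poincar\'e-type inequality
$\|f\|_{L^2(\mGraph_{\alpha,n,\infty})}^2 \le C_n \|f'\|_{L^2(\mGraph_{\alpha,n,\infty})}^2$
valid for all $f$ in the form domain of $\Delta_{\alpha,n,\infty}$, with $C_n \sim \tfrac{2}{2\alpha-1}\,n^{1-2\alpha}$. By the Rayleigh--Ritz characterisation, this immediately gives $\inf\sigma(\Delta_{\alpha,n,\infty}) \ge 1/C_n$, of the required order. The pivotal ingredient is the Dirichlet condition $f(\gamma_0)=0$ at the infinite-volume graph end $\gamma_0$ supplied by \autoref{rem:automatic-dirichlet}, which is inherited by $\mGraph_{\alpha,n,\infty}$ because the surgery in \autoref{lem:lower_1} happens far from $\gamma_0$ and leaves the behaviour at that end unchanged.

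To establish the inequality, I would decompose $\mGraph_{\alpha,n,\infty}$ into its backbone piece $B_n$, an interval of length $L_n \le (n-1)^{-\alpha}$ with Dirichlet at $\gamma_0$ on one side, and the teeth $T_m$ of length $m^{-\alpha}$ for $m \ge n$, attached at the backbone points $b_m$ lying at distance $m^{-\alpha}$ from $\gamma_0$. Since $f(\gamma_0)=0$, the fundamental theorem of calculus together with Cauchy--Schwarz gives the pointwise bound $|f(x)|^2 \le x\,\|f'\|_{L^2(B_n)}^2$ on $B_n$; integrating over $B_n$ yields $\|f\|_{L^2(B_n)}^2 \le \tfrac{1}{2}L_n^2\,\|f'\|_{L^2(B_n)}^2$, while evaluation at $x=b_m$ produces $|f(b_m)|^2 \le m^{-\alpha}\,\|f'\|_{L^2(B_n)}^2$. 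On each tooth, writing $f(y) = f(b_m) + \int_{b_m}^y f'$ and applying $(a+b)^2 \le 2a^2+2b^2$ together with Cauchy--Schwarz gives $\|f\|_{L^2(T_m)}^2 \le 2m^{-2\alpha}\,\|f'\|_{L^2(B_n)}^2 + 2m^{-2\alpha}\,\|f'\|_{L^2(T_m)}^2$. Summing these tooth contributions over $m \ge n$ and invoking the tail estimate $\sum_{m\ge n} m^{-2\alpha} \le \frac{(n-1)^{1-2\alpha}}{2\alpha-1}$ (convergent precisely because $\alpha > \tfrac12$) combines with the backbone term to furnish the desired Poincar\'e constant $C_n$.

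The main obstacle will be pinning down the exact prefactor $\tfrac{2\alpha-1}{2}$, as opposed to merely the polynomial order $n^{2\alpha-1}$. Note that the backbone contribution $\tfrac{1}{2}L_n^2 \asymp n^{-2\alpha}$ is of strictly lower order in $n$ than the dominant tooth sum $\tfrac{2}{2\alpha-1}n^{1-2\alpha}$, but not pointwise negligible; the same applies to the gap between $(n-1)^{1-2\alpha}$ and $n^{1-2\alpha}$. Absorbing both into the leading term costs a $(1+o(1))$ factor, so the clean statement with constant $\tfrac{2\alpha-1}{2}$ is really an asymptotic one, valid for all sufficiently large $n$---which is exactly what is needed for the application in \autoref{lem:lower_1}. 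Beyond this constant-tracking, every remaining step is elementary, reducing to one-dimensional interval estimates and a convergent $p$-series tail.
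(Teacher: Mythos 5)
Your proposal is correct and is essentially the paper's argument: both hinge on the Dirichlet condition at the infinite-volume end $\gamma_0$ from \autoref{rem:automatic-dirichlet}, the pointwise bound $|f(x)|^2\le \dist(x,\gamma_0)\,\|f'\|_{L^2}^2$ obtained from the fundamental theorem of calculus and Cauchy--Schwarz along the path from $\gamma_0$ to $x$, and the convergent tail $\sum_{m\ge n}m^{-2\alpha}\le\frac{(n-1)^{1-2\alpha}}{2\alpha-1}$. The only difference is bookkeeping: the paper applies Cauchy--Schwarz once along the entire path and integrates over the graph, getting $\inf\sigma(\Delta_{\alpha,n,\infty})\ge\bigl(\int_{\mGraph_{\alpha,n,\infty}}\dist(x,\gamma_0)\,\mathrm{d}x\bigr)^{-1}\ge(2\alpha-1)(n-1)^{2\alpha-1}$, whose built-in factor-of-two slack yields the stated constant for every $n\ge 2$, whereas your backbone/teeth split with $(a+b)^2\le 2a^2+2b^2$ loses essentially that factor, which is why you only recover $\frac{2\alpha-1}{2}n^{2\alpha-1}$ up to $(1+o(1))$ --- harmless for the application, as you correctly note, but avoidable.
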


\begin{proof}	
By the Courant variational principle,
\[
	\inf \sigma (\Delta_{\alpha, n, \infty})
	=
	\inf_{\phi \in H^1(\mGraph),\ \phi \neq 0}
	\frac{\lVert \phi' \rVert_{L^2(\mGraph)}^2}{\lVert \phi \rVert_{L^2(\mGraph)}^2}.
\]
Take $x \in \mGraph_{\alpha, n, \infty}$ at distance $t > 0$ to the end, and pick the unique path connecting the end to $t$.
For $\phi \in H^1(\mGraph)$, by the fundamental theorem of calculus for $H^1$ functions and the Cauchy-Schwarz inequality,
\[
	\lvert \phi(x) \rvert
	\leq
	\int_0^t
	\lvert
	\phi'(t)
	\rvert
	\mathrm{d} t
	\leq
	\sqrt{t}
	\lVert \phi' \rVert_{L^2(\mGraph)}.
\]
Thus,
\[
	\lVert \phi \rVert_{L^2(\mGraph)}^2
	\leq
	\lVert \phi' \rVert_{L^2(\mGraph)}^2
	\int_{\mGraph_{\alpha, n, \infty}}
	\dist (x, 0)
	\
	\mathrm{d} x
\]
or, equivalently,
\[
	\inf \sigma (\Delta_{\alpha, n, \infty})
	\geq
	\left(
	\int_{\mGraph_{\alpha, n, \infty}}
	\dist (x, 0)
	\
	\mathrm{d} x
	\right)^{-1}.
\]
We calculate
\begin{align*}
	\int_{\mGraph_{\alpha, n, \infty}}
	\dist (x, 0)
	\
	\mathrm{d} x
	&=
	\int_0^{n^{-\alpha}} x\ \mathrm{d} x
	+
	\sum_{l = n}^\infty
	\int_{l^{-\alpha}}^{2 l^{-\alpha}}
	x\
	\mathrm{d} x
	\\
	&=
	\frac{1}{2}
	\left[
		n^{- 2 \alpha}
		+
		3
		\sum_{l = n}^\infty
		l^{- 2 \alpha}
	\right]
	\leq
	2
	\sum_{l = n}^\infty l^{-2 \alpha}
	\\
	&\leq
	2
	\int_{n - 1}^\infty x^{-2 \alpha}\ \mathrm{d} x
	=
	\frac{1}{2 \alpha - 1} (n - 1)^{1 - 2 \alpha}.
	\qedhere
\end{align*}
\end{proof}

\begin{lemma}
	\label{lem:lower_3}
	There is a constant $C = C(\alpha) > 0$ such that for sufficiently large $k,n$ we have
	\[
	\lambda_k(\mGraph_{\alpha, 0,n-1})
	\leq
	\begin{cases}
	C
	\frac{k^2}{n^{2 - 2 \alpha}}
	&
	\text{if $\alpha \in (\frac{1}{2}, 1)$, and}
	\\
	C \frac{k^2}{\ln(n)^2}
	&
	\text{if $\alpha = 1$}.
	\end{cases}	
	\]
\end{lemma}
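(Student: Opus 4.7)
The plan is to apply the tree upper bound \autoref{thm:BKKM_upper} directly to the truncated comb $\mGraph_{\alpha,0,n-1}$, in exactly the same spirit as the proof of \autoref{thm:upper_bound}, but now keeping careful track of the fact that the truncation index $n-1$ is a free parameter, independent of $k$. Since $\mGraph_{\alpha,0,n-1}$ is a finite, connected subtree of the infinite tree $\mGraph_\alpha$, it is a compact metric tree, so the hypotheses of \autoref{thm:BKKM_upper} are satisfied.

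The first---and essentially the only delicate---step is to identify the degree-one vertices and their boundary conditions. These are precisely the tips of the $(n-1)$ teeth together with the cut vertex at $x = n^{-\alpha}$; the rightmost backbone point $x = 1$ is a degree-two interior vertex, being the base of the first (length one) tooth, and hence does not contribute to the leaf count. All $n$ of these leaves carry Neumann conditions (the Kirchhoff--Neumann cut specified in the setup of \autoref{lem:lower_1} reduces to Neumann at a degree-one vertex), so $|D| = 0$ and $|N| = n$ in the notation of \autoref{thm:BKKM_upper}. Plugging this in yields
\[
\lambda_k(\mGraph_{\alpha,0,n-1}) \leq \left(k - 2 + \frac{n}{2}\right)^2 \frac{\pi^2}{|\mGraph_{\alpha,0,n-1}|^2}.
\]
The main obstacle is just this vertex count: as stressed in the remark following \autoref{thm:BKKM_upper}, it is precisely the linear dependence of the prefactor on $|N|$---rather than on the (much larger) edge count---that will make the bound useful.

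The last step is to bound the total volume from below by the sum of the tooth lengths and invoke the same integral comparison as in~\eqref{eq:estim-trunc}, with the truncation index $n-1$ in place of $k$:
\[
|\mGraph_{\alpha,0,n-1}| \geq \sum_{\ell=1}^{n-1} \ell^{-\alpha} \geq \begin{cases} \dfrac{1}{2(1-\alpha)}\, n^{1-\alpha} & \text{if } \alpha \in (\tfrac{1}{2}, 1),\\ \ln(n) & \text{if } \alpha = 1, \end{cases}
\]
for $n$ sufficiently large. In the regime of interest for the subsequent lower-bound proof, where $n$ and $k$ are taken of comparable order, one has $k - 2 + n/2 = O(k)$ and hence $(k - 2 + n/2)^2 = O(k^2)$; combining this with the volume bound produces the two claimed estimates. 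The computation is otherwise a direct repetition of the argument for \autoref{thm:upper_bound}.
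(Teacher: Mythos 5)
Your identification of the boundary data is correct and in fact more careful than the paper's own computation: the truncated comb $\mGraph_{\alpha,0,n-1}$ has $n$ Neumann leaves (the $n-1$ tooth tips plus the cut vertex), so \autoref{thm:BKKM_upper} yields the prefactor $\bigl(k-2+\tfrac{n}{2}\bigr)^2$, whereas the paper's proof simply writes $\bigl(k-2+\tfrac{k}{2}\bigr)^2$, i.e.\ it treats $\lvert N\rvert$ as though it were $k$. The genuine gap in your argument is the final step, where you assert that $k-2+\tfrac{n}{2}=O(k)$ because ``$n$ and $k$ are taken of comparable order''. That is not the case: the lemma is stated for all sufficiently large $k$ and $n$ with no coupling between them, and in the one place it is used (the proof of \autoref{thm:lower_bound}) one takes $n\asymp k^2$, so that $k-2+\tfrac{n}{2}\asymp n\asymp k^2$ and your estimate only gives $\lambda_k(\mGraph_{\alpha,0,n-1})\le C n^2/n^{2-2\alpha}=Cn^{2\alpha}\asymp k^{4\alpha}$ --- much weaker than the claimed $Ck^2/n^{2-2\alpha}\asymp k^{4\alpha-2}$, and useless for the comparison with \autoref{lem:lower_2} that the lower-bound proof requires.

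The discrepancy is not merely an artefact of the method: the inequality as stated cannot hold uniformly in large $k$ and $n$. Since $\diam(\mGraph_{\alpha,0,n-1})\le 4$, any $L^2$-normalised eigenfunction $\phi$ orthogonal to constants satisfies $\lVert\phi\rVert_\infty\le \operatorname{osc}(\phi)\le 2\lVert\phi'\rVert_{L^2}$, whence $1\le \lVert\phi\rVert_\infty^2\,\lvert\mGraph_{\alpha,0,n-1}\rvert\le C\lambda_2\, n^{1-\alpha}$ and so $\lambda_k\ge\lambda_2\ge c\,n^{\alpha-1}$ for all $k\ge 2$; for fixed $k$ and $n\to\infty$ this exceeds $Ck^2 n^{2\alpha-2}$ (and likewise $Ck^2/\ln(n)^2$ exceeds no lower bound of order $1/\ln n$ when $\alpha=1$). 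So the claimed bound can only be valid under a constraint of the type $k\gtrsim n^{(1-\alpha)/2}$, and a correct proof must make such a coupling explicit. In short: your leaf count exposes a real problem --- one shared by the paper's own proof, which silently replaces $\lvert N\rvert=n$ by $k$ --- but your way of dismissing it does not close the gap, and the lemma as stated needs either a restriction on the admissible pairs $(k,n)$ or a different argument.
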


\begin{proof}
	For $\alpha \in (\frac{1}{2},1]$ we have
	\[
	|\mGraph_{\alpha, 0,n-1}|
	=
	(1 - n^{- \alpha})
	+
	\sum_{l = 1}^{n-1}
	n^{- \alpha}
	\geq
	\int_1^n
	x^{- \alpha}
	\
	\mathrm{d} x
	= \begin{cases}
	\frac{1}{1 - \alpha}
	\left[ n^{1 - \alpha} - 1 \right] & \text{if $\alpha \in (\frac{1}{2}, 1)$,}\\
	\ln (n) & \text{if $\alpha = 1$.}
	\end{cases}
	\]
	We now use these bounds on the estimate on $\lambda_k$ from \autoref{thm:BKKM_upper}:
	\[
	\lambda_k(\mGraph_{\alpha, 0,n-1})
	\leq
	\left(k - 2 + \frac{k}{2} \right)^2
	\frac{\pi^2}{|\mGraph_{\alpha, 0,n-1}|^2}
	\leq
	\begin{cases}
	C
	\frac{k^2}{n^{2 - 2 \alpha}}
	&
	\text{if $\alpha \in (\frac{1}{2}, 1)$, and}\\
	C \frac{k^2}{\ln(n)^2}
	&
	\text{if $\alpha = 1$.}
	\end{cases}
	\qedhere	
	\]
\end{proof}

\begin{theo}
	\label{thm:lower_bound}
	For every $\alpha \in (\frac{1}{2}, 1)$, then there is $c = c(\alpha) > 0$ such that for sufficiently large $k$
	\[
	\lambda_k(\mGraph_\alpha)
	\geq
	c
	k^{4 \alpha-2}.
	\]
	If $\alpha = 1$, then there is $c > 0$ such that for sufficiently large $k$
	\[	
	\lambda_k(\mGraph_\alpha)
	\geq
	c \frac{k^2}{\ln(k)^4}.
	\]
\end{theo}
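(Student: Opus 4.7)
The plan is to pick the truncation index $n = n(k)$ so that the hypothesis of \autoref{lem:lower_1} is satisfied, and then lower-bound $\lambda_k(\mathcal{G}_{\alpha,0,n-1})$ by \autoref{thm:BKKM_lower} combined with a rank-one Courant-type interlacing to compensate for the missing Dirichlet vertex. Combining the lower bound $\inf\sigma(\Delta_{\alpha,n,\infty}) \gtrsim n^{2\alpha-1}$ of \autoref{lem:lower_2} with the upper bound of \autoref{lem:lower_3}, the hypothesis $\inf\sigma(\Delta_{\alpha,n,\infty}) \ge \lambda_k(\mathcal{G}_{\alpha,0,n-1})$ is guaranteed once $c n^{2\alpha-1} \ge C k^2/n^{2-2\alpha}$ when $\alpha\in(\tfrac12,1)$, i.e.\ whenever $n \gtrsim k^2$; respectively once $n\ln(n)^2 \gtrsim k^2$ when $\alpha = 1$. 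Accordingly, for sufficiently large $k$ I set $n = \lceil c_1 k^2 \rceil$ when $\alpha\in(\tfrac12,1)$ and $n = \lceil c_1 k^2/\ln(k)^2 \rceil$ when $\alpha = 1$; \autoref{lem:lower_1} then yields $\lambda_k(\mathcal{G}_\alpha) \ge \lambda_k(\mathcal{G}_{\alpha,0,n-1})$.

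To lower-bound the right-hand side, note that $\mathcal{G}_{\alpha,0,n-1}$ is a compact metric tree of total length $L = |\mathcal{G}_{\alpha,0,n-1}| \lesssim n^{1-\alpha}$ when $\alpha<1$ and $L \lesssim \ln(n)$ when $\alpha = 1$. \autoref{thm:BKKM_lower} would give $\lambda_k \ge k^2\pi^2/(4L^2)$ directly, but it requires at least one Dirichlet vertex, which the Kirchhoff--Neumann cut does not supply. The remedy is the standard rank-one Courant--Fischer interlacing: point evaluation at the cut vertex is a continuous functional on $H^1$, so imposing a Dirichlet condition there restricts the form domain to a codimension-one subspace, and therefore
\[
\lambda_k(\mathcal{G}_{\alpha,0,n-1}) \ge \lambda_{k-1}\bigl(\mathcal{G}_{\alpha,0,n-1}^{\mathrm D}\bigr),
\]
where the right-hand side is the Laplacian on the same graph with an added Dirichlet condition at the cut. \autoref{thm:BKKM_lower} now applies to $\mathcal{G}_{\alpha,0,n-1}^{\mathrm D}$ and delivers $\lambda_{k-1}\bigl(\mathcal{G}_{\alpha,0,n-1}^{\mathrm D}\bigr) \ge (k-1)^2\pi^2/(4L^2)$.

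Assembling: for $\alpha \in (\tfrac12, 1)$ with $n \asymp k^2$ one has $L \asymp k^{2-2\alpha}$, so $\lambda_k(\mathcal{G}_\alpha) \gtrsim k^{4\alpha - 2}$; for $\alpha = 1$ with $n \asymp k^2/\ln(k)^2$ one has $L \asymp \ln(k)$, and the same chain in fact delivers $\lambda_k(\mathcal{G}_\alpha) \gtrsim k^2/\ln(k)^2$, which is stronger than the claimed $ck^2/\ln(k)^4$ and so certainly implies it.

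The main obstacle is the absence of a Dirichlet vertex on the compact piece produced by the Neumann cut: without one, $\lambda_1 = 0$ (the constant eigenfunction survives), so no simple Poincar\'e-type estimate on $\mathcal{G}_{\alpha,0,n-1}$ will produce polynomial growth in $k$. The rank-one interlacing inequality is the technical bridge that lets us spend one eigenvalue index to earn a Dirichlet vertex, so that \autoref{thm:BKKM_lower} becomes applicable; this is the only ingredient beyond the three preparatory lemmas, and it is where the proof deviates from purely bookkeeping the balance between $n$ and $k$.
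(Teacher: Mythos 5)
Your proof is correct and follows the same overall strategy as the paper's: the same Kirchhoff--Neumann cut into $\mGraph_{\alpha,0,n-1}$ and $\mGraph_{\alpha,n,\infty}$, the same three preparatory lemmas, and the same balancing of the truncation index $n$ against $k$. It differs in two substantive and worthwhile ways. First, you interpose the rank-one interlacing $\lambda_k(\mGraph_{\alpha,0,n-1}) \ge \lambda_{k-1}(\mGraph_{\alpha,0,n-1}^{\mathrm D})$ before invoking \autoref{thm:BKKM_lower}; the paper applies that theorem directly to the Neumann-cut piece, which has \emph{no} Dirichlet vertex, so your extra step is not pedantry but supplies a justification the paper's write-up omits, at the harmless cost of replacing $k^2$ by $(k-1)^2$. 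Second, for $\alpha=1$ you take $n \asymp k^2/\ln(k)^2$ explicitly and bound the volume directly by $|\mGraph_{1,0,n-1}| \lesssim \ln(n) \asymp \ln(k)$, which yields $\lambda_k(\mGraph_1) \gtrsim k^2/\ln(k)^2$. The paper instead routes the computation through the inverse function $\eta$ of $x\mapsto x\ln(x)^2$ and, when bounding the volume, discards a factor $\ln(\eta(Ck^2))$ from a denominator; this inflates the volume bound from order $\ln(k)$ to order $\ln(k)^2$ and is the sole source of the weaker exponent $\ln(k)^4$ in the stated theorem. Your estimate is therefore sharper, matches the upper bound of \autoref{theo:2} up to constants, and of course implies the claim as stated; the $\alpha\in(\tfrac12,1)$ case is handled identically in both arguments.
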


\begin{proof}
	In the following proof, $C>0$ will denote a constant, possibly depending on $\alpha$ but not on $k$ or $n$, which may change from line to line.
	We first consider the case $\alpha \in (\frac{1}{2}, 1)$.	
	From~\autoref{lem:lower_2} and~\autoref{lem:lower_3}, we infer 
	\begin{equation}
	\label{eq:estimate_infimum}
	\inf \sigma(\Delta_{\alpha, n, \infty})
	\geq
	\lambda_k(\Delta_{\alpha, 0, n-1})
	\end{equation}
	as soon as
	\[
	\frac{2 \alpha - 1}{2}
	\cdot
	n^{2 \alpha - 1}
	\geq
	C
	\frac{k^2}{n^{2 - 2 \alpha}}
	\]
	which is the case if $n \geq C k^2$.
	Consequently, with this choice, the conditions of ~\autoref{lem:lower_1} are satisfied.
	\autoref{thm:BKKM_lower} then yields
	\[
	\lambda_k(\mGraph_\alpha)
	\geq
	\frac{k^2 \pi^2}{4 |\mGraph_{\alpha, 0, \lceil C k^2 \rceil - 1}|^2}.
	\]
	Estimating
	\[
	|\mGraph_{\alpha, 0, \lceil C k^2 \rceil - 1}|
	\leq
	1
	+
	\sum_{l = 1}^{\lceil C k^2 \rceil - 1}
	l^{- \alpha}
	\leq
	1
	+
	\int_1^{C k^2}
	x^{- \alpha}\ \mathrm{d}	x
	=
	1
	+
	\frac{1}{1 - \alpha}
	\left[
	C^{1 - \alpha} k^{2 - 2 \alpha} - 1
	\right]
	\leq
	C
	k^{2 - 2 \alpha}
	\]
	leads to
	\[
	\lambda_k(\mGraph_\alpha)
	\geq
	c k^{-2 + 4 \alpha}
	\]
	for a suitable constant $c>0$.

	In the case $\alpha = 1$, we proceed analogously and use \autoref{lem:lower_2} as well as~\autoref{lem:lower_3} to infer ~\eqref{eq:estimate_infimum}, as soon as
	\[
	n \ln(n)^2 \geq C k^2.
	\]
	Denoting by $\eta \colon [0, \infty) \to [1, \infty)$ the inverse of $x \mapsto x \cdot \ln(x)^2$, we can proceed as before: the conditions of ~\autoref{lem:lower_1} are satisfied and
	\autoref{thm:BKKM_lower} yields
	\[
	\lambda_k(\mGraph_1)
	\geq
	\frac{k^2 \pi^2}{4 |\mGraph_{1, 0, \lceil \eta( C k^2) \rceil - 1}|^2}.
	\]
	We estimate, again assuming $k$ sufficiently large,
	\begin{align*}
	|\mGraph_{1, 0, \lceil \eta( C k^2) \rceil - 1}|
	&\leq
	1
	+
	\sum_{l = 1}^{ \lceil \eta( C k^2) \rceil - 1} l^{-1}
	\leq
	1
	+
	\int_{1}^{\eta(C k^2)}
	x^{-1} \mathrm{d} x
	\\	
	&=
	1 + \ln(\eta(C k^2))
	\leq
	C \ln (\eta(C k^2))
	=
	C
	\frac{C k^2}{\eta(C k^2) \ln(\eta(C k^2))}
	\leq
	\frac{C k^2}{\eta(C k^2)},
	\end{align*} 
	and conclude
	\[
	\lambda_k(\mGraph_1)
	\geq
	C
	\frac{\eta(C k^2)^2}{k^2}.
	\]
	Using $\eta(x) \geq \frac{x}{\ln(x)^2}$ for large $x$ and since $k$ is assumed to be sufficiently large, we can bound this from below by
	\[
	\lambda_k(\mGraph_1)
	\geq
	C \frac{k^2}{\ln(C k^2)^4}
	=
	C
	\frac{k^2}{(\ln(C) + 2 \ln(k))^4}
	\geq
	c \frac{k^2}{\ln(k)^4}
	\]
	for a suitable constant $c=c(\alpha)>0$.	
\end{proof}

\subsection{Weyl asymptotics for finite volume comb graphs}

\begin{proof}[Proof of \autoref{theo:3}]

The lower bound holds by \cite[Theorem~4.1]{DufKenMug22}. For the upper bound, consider the metric tree $\Graph_{\alpha,k}$ obtained truncating $\Graph_{\alpha}$ at the midpoint between the $k$-th and $(k+1)$-th teeth, like in the proof of \autoref{thm:upper_bound}, and impose a Dirichet condition at the cut point.
We denote the resulting operator by $\Delta^{\mathrm{N,D}}_{\Graph_{\alpha,k}}$, wheres $\Delta^{\mathrm{N,N}}_{\Graph_{\alpha,k}}$ denotes the operator obtained imposing a Neumann condition at the cut point.

By domain monotonicity, the $k$-th eigenvalue of $\Delta^{\mathrm{N}}_{\Graph_\alpha}$ is bounded from above by the $k$-th eigenvalue of $\Delta^{\mathrm{N,D}}_{\Graph_{\alpha,k}}$ which is, by \cite[Theorem~3.4]{BerKenKur19}, not larger than $(k+1)$-th eigenvalue of $\Delta^{\mathrm{N,N}}_{\Graph_{\alpha,k}}$. The latter can, in turn, be controlled using \cite[Theorem~3.4]{Roh17} 
As $\diam(\Graph_{\alpha,k})=2$ for all $k\ge 2$ we conclude that
\[
\frac{k^2 \pi^2}{4|\Graph_\alpha|^2}\le 
\lambda_k(\Graph_\alpha)\le \frac{k^2 \pi^2}{4}.\qedhere
\]
\end{proof}

\begin{rem}
For $\alpha >1$ one also has the asymptotics
\begin{equation}\label{eq:upper-asympt-a>1}
	\limsup_{k \to \infty} \frac{\lambda_k (\Graph_\alpha)}{k^2} \leq \frac{\pi^2}{|\Graph_\alpha|^2}
\end{equation}
as follows: consider the graph $\Graph_\alpha^D$ which has the same edge set as $\Graph_\alpha$, but Dirichlet conditions at all vertices; then $\Graph_\alpha^D$ is equivalent to a disjoint union of (infinitely many) Dirichlet intervals of the same total volume as $\Graph_\alpha$. But since clearly $H^1_0(\Graph_\alpha^D)$ can be canonically identified with a subset of $H^1(\Graph_\alpha)$, if $\lambda_k (\Graph_\alpha^D)$ denotes the $k$-th eigenvalue of the Dirichlet Laplacian on $\Graph_\alpha^D$, then the usual min-max characterisation implies that $\lambda_k (\Graph_\alpha) \leq \lambda_k^D (\Graph_\alpha)$ for all $k$. Now an elementary calculation involving the eigenvalue counting function of $\Graph_\alpha^D$ (which is the sum of the individual eigenvalue counting functions of the constituent intervals) shows that $\lambda_k (\Graph_\alpha^D) = \frac{\pi^2 k^2}{|\Graph_\alpha^D|^2} + o(k^2)$, from which \eqref{eq:upper-asympt-a>1} follows.
\end{rem}

\end{document}